\crefname{equation}{}{}
\Crefname{equation}{}{}
\newtheorem{theorem}{Theorem}
\newtheorem{lemma}[theorem]{Lemma}
\newtheorem{definition}{Definition}
\crefname{definition}{\textbf{definition}}{definitions}
\Crefname{definition}{Definition}{Definitions}
\crefname{assumption}{\textbf{assumption}}{assumptions}
\Crefname{assumption}{Assumption}{Assumptions}
\let\hat\widehat
\let\tilde\widetilde
\newcommand{\dmm}{d_{\textsc{MM}}}
\newcommand{\dm}{d_{\textsc{M}}}
\newcommand{\estdens}{\hat{p}_h}
\newcommand{\conc}{\mathcal{C}}
\newcommand{\level}{\lambda}
\newcommand{\treef}{T_f}
\newcommand{\treeg}{T_g}
\newcommand{\treep}{T_p}
\newcommand{\esttree}{T_{\hat{p}_h}}
\newcommand{\treepstar}{T_{\densstar}}
\newcommand{\treeq}{T_q}
\newcommand{\colltf}{\{T_f\}}
\newcommand{\colltg}{\{T_g\}}
\newcommand{\cluster}{C}
\newcommand{\treeph}{T_{p_{h}}}
\newcommand{\ellinf}{\ell_\infty}
\newcommand{\life}{\mathrm{life}}
\newcommand{\densstar}{p_0}
\newcommand{\mPstar}{\mathbb{P}_0}
\renewenvironment{proof}{{\bf Proof.}}{$\Box$}
\title{Statistical Inference for Cluster Trees}
\author{
Jisu Kim\\
Department of Statistics\\
Carnegie Mellon University\\
Pittsburgh, USA \\
\texttt{jisuk1@andrew.cmu.edu} \\
\And
Yen-Chi Chen\\
Department of Statistics\\
University of Washington\\
Seattle, USA\\
\texttt{yenchic@uw.edu} \\
\And
Sivaraman Balakrishnan\\
Department of Statistics\\
Carnegie Mellon University\\
Pittsburgh, USA \\
\texttt{siva@stat.cmu.edu} \\
\And
Alessandro Rinaldo    \\
Department of Statistics\\
Carnegie Mellon University\\
Pittsburgh, USA \\
\texttt{arinaldo@stat.cmu.edu} \\
\And
Larry Wasserman\\
Department of Statistics\\
Carnegie Mellon University\\
Pittsburgh, USA \\
\texttt{larry@stat.cmu.edu} 
}
\begin{document}

\maketitle

\begin{abstract}
	A cluster tree provides a highly-interpretable
	summary of a density function by representing the hierarchy of its high-density clusters.
	It is estimated using the empirical tree, which is the cluster tree constructed
	from
	a density estimator.
	This paper 
	addresses the basic question of quantifying our 
	uncertainty by assessing
	the statistical significance of topological features
	of an empirical cluster tree. 
	We first study a variety of metrics that can be used
	to compare different trees, analyze their properties and 
	assess their suitability for inference.
	We then propose methods to construct 
	and summarize confidence
	sets for the unknown true cluster tree.
	We introduce a partial ordering on cluster
	trees which we use to prune some of
	the statistically insignificant features of the
	empirical tree, yielding interpretable and
	parsimonious cluster trees. 
	Finally, we illustrate the proposed
	methods on a variety of synthetic examples
	and furthermore demonstrate their utility in the analysis
	of a Graft-versus-Host Disease (GvHD) data set. 
\end{abstract}

\section{Introduction}

Clustering is a central problem in the analysis and exploration of data.
It is a broad topic, with several existing distinct 
formulations, objectives, and methods. Despite the extensive literature
on the topic, a common aspect of the clustering methodologies that has hindered its widespread
scientific adoption
is the dearth of methods for statistical inference
in the context of clustering. 
Methods for inference broadly allow us
to quantify our uncertainty, to discern ``true'' clusters from finite-sample
artifacts, as well as to rigorously test hypotheses related to the estimated
cluster structure. 

In this paper, we study statistical inference for the
\emph{cluster tree} of an unknown density. 
We assume that 
we observe an i.i.d. sample $\{X_1, \ldots, X_n\}$ 
from a distribution $\mPstar$
with unknown density $\densstar$. 
Here, $X_i\in{\cal X}\subset\mathbb{R}^d$.
The connected components
$\conc(\level)$, of the upper level set $\{x: \densstar(x) \geq \level\}$,
are called \emph{high-density clusters}. The set of high-density clusters
forms a nested hierarchy which is referred to as the \emph{cluster tree}\footnote{It is also 
	referred to as the density tree or the level-set tree.} of $\densstar$, which we denote
as $\treepstar$.

Methods for density clustering
fall broadly in the space of hierarchical clustering algorithms, and inherit
several of their advantages: they allow for extremely general cluster
shapes and sizes, and in general do not require the pre-specification
of the number of clusters. Furthermore, unlike flat clustering methods, hierarchical
methods are able to provide a multi-resolution summary of the underlying density.
The cluster tree, irrespective of the dimensionality of the input
random variable, is displayed as a two-dimensional object and this makes it
an ideal tool to visualize data.
In the context of statistical 
inference, density clustering has another important
advantage over other clustering methods: the object of inference, the cluster
tree of the unknown density $\densstar$, is clearly specified.

In practice, the cluster tree is estimated from a finite sample, $\{X_1,\ldots,X_n\} \sim \densstar$.
In a scientific application, we are often most interested in reliably distinguishing topological features genuinely present in the cluster tree of the unknown $\densstar$, from topological features that arise due to random fluctuations in the finite sample $\{X_1,\ldots,X_n\}$.
In this paper, we focus our inference on the 
cluster tree of the kernel density estimator, $\esttree$, where 
$\estdens$ is the 
kernel density estimator,
\begin{align}
\label{eq:kde}
\estdens(x) = \frac{1}{nh^d}\sum_{i=1}^n K\left(\frac{\|x-X_i\|}{h}\right),
\end{align}
where $K$ is a kernel and $h$ is an appropriately chosen bandwidth
\footnote{We address computing the tree $\esttree$, 
	and the choice of bandwidth in more detail 
	in what follows.}.


To develop methods for statistical inference on cluster trees, we construct a confidence set for $\treepstar$, i.e. a collection of trees that will include $\treepstar$ with some (pre-specified) probability. A confidence set can be converted to a hypothesis test, and a confidence set shows both statistical and scientific significances while a hypothesis test can only show statistical significances \cite[p.155]{Wasserman2010}.

To construct and understand the 
confidence set, we need to solve a 
few technical and conceptual 
issues. The first issue is that we need a 
\emph{metric} on 
trees, in order to quantify the collection of trees that
are in some sense ``close enough'' to $\esttree$ to be
statistically indistinguishable from it.
We use the bootstrap to construct tight data-driven confidence sets. 
However, only some metrics are sufficiently ``regular'' 
to be amenable to bootstrap inference, which guides our choice of a suitable metric 
on trees.

On the basis of a finite 
sample, the true density is indistinguishable from a density with additional
infinitesimal perturbations. This leads to the second
technical issue which is that our confidence set invariably contains infinitely
complex trees. Inspired by the idea of one-sided inference \cite{donoho1988one},
we propose a partial ordering on the set of all density trees
to define simple trees.
To find simple representative trees in the confidence set, we prune the empirical cluster tree
by removing statistically insignificant features. These pruned trees are valid with statistical
guarantees that are simpler than the empirical cluster tree in the proposed partial ordering.

{\bf Our contributions: } 
We begin by 
considering a variety of metrics on trees, 
studying their properties and discussing their suitability for
inference. 
We then propose a method of constructing confidence sets and for visualizing trees in this set.
This distinguishes aspects of the estimated 
tree correspond to real features (those present in the cluster tree $\treepstar$) 
from noise features. 
Finally, 
we apply our methods to several simulations, 
and a Graft-versus-Host Disease (GvHD) data set to demonstrate the usefulness of our techniques and the role of statistical inference in clustering problems.

{\bf Related work: }  
There is a vast literature on density trees (see for instance the
book by \citet{klemela2009smoothing}), and we focus our 
review on works most closely aligned with our paper. 
The formal definition of the cluster tree, and notions of consistency
in estimation of the cluster tree date back to the work of \citet{Hartigan81}.
Hartigan studied the efficacy of single-linkage in estimating the cluster tree
and showed that single-linkage is inconsistent when the input
dimension $d > 1$. Several fixes to single-linkage have since been proposed (see for instance 
\cite{stuetzle2010generalized}). The paper of \citet{chaudhuri2010rates} provided
the first rigorous
minimax analysis of the density clustering and 
provided a computationally tractable, consistent estimator 
of the cluster tree. The papers \cite{
	balakrishnan2012,chaudhuri14pruning,eldridge2015beyond,kpotufe2011pruning} 
propose various modifications and analyses of estimators for the cluster tree.
While the question of estimation has been extensively addressed, to our knowledge
our paper is the first concerning inference for the cluster tree. 

There is a literature on inference for phylogenetic trees (see
the papers \cite{felsenstein1985confidence,efron96}), but the object of inference
and the hypothesized generative models are typically quite different. 
Finally, in our paper, we also consider various metrics on trees. There are several
recent works, in the computational topology literature, that have considered 
different metrics on trees. The most relevant to our own work, are the papers
\cite{bauer2015strong,morozov2013interleaving} that propose the functional 
distortion metric and the interleaving distance on trees. These metrics, however,
are NP-hard to compute in general. In Section~\ref{sec:metric}, we 
consider a variety of computationally tractable metrics and assess their
suitability for inference.

\begin{figure}
	\center
	\includegraphics[height=1 in]{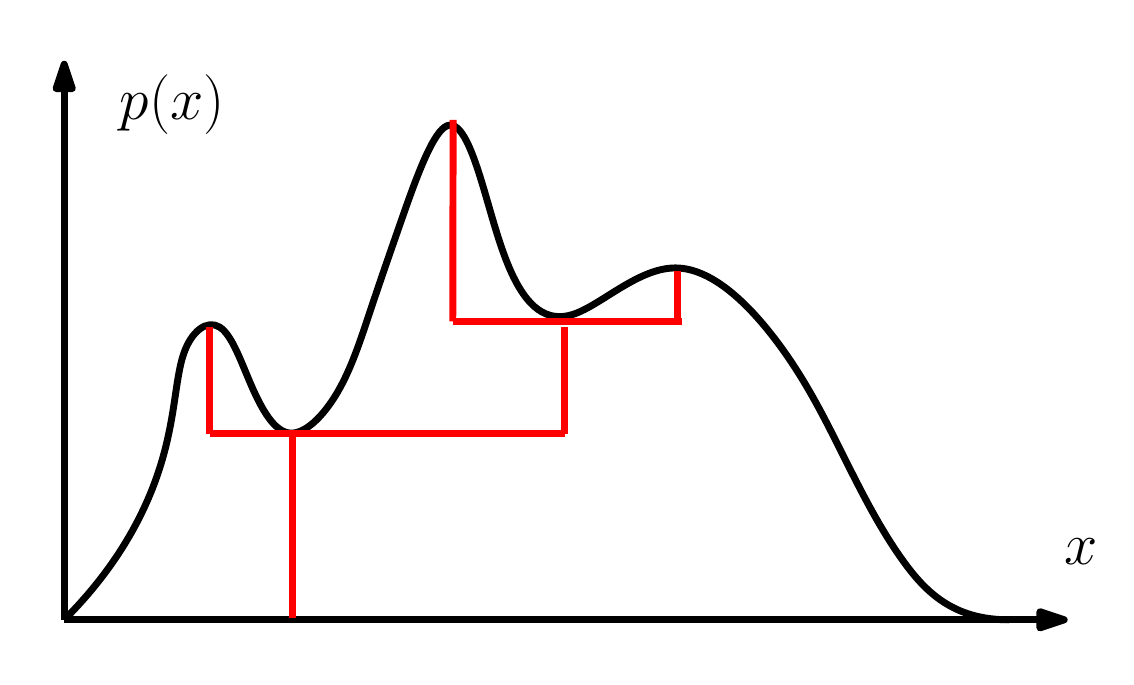}
	\includegraphics[height=1 in]{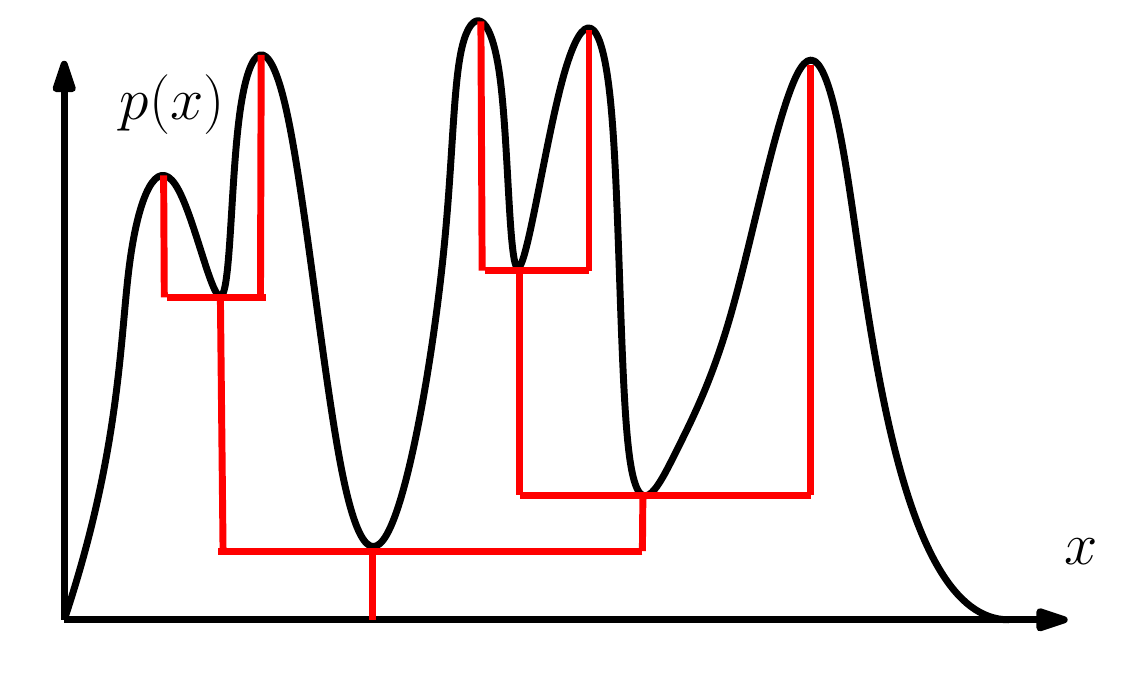}
	\caption{Examples of density trees. Black curves are the original density functions and the red trees are the
		associated density trees.}
	\label{fig:ex}
\end{figure}


\section{Background and Definitions}
\label{sec:background}

We work with densities defined on a subset $\mathcal{X} \subset \mathbb{R}^d$, and denote
by $\|. \|$ the
Euclidean norm on $\mathcal{X}$.
Throughout this paper we restrict our attention to cluster tree estimators that are specified in 
terms of a function $f: \mathcal{X} \mapsto [0,\infty)$, i.e. we have the following definition:
\begin{definition}
	\label{def:tree}
	For any $f: \mathcal{X} \mapsto [0,\infty)$
	the {\em cluster tree} of $f$ is a function 
	$\treef: \mathbb{R} \mapsto 2^{\mathcal{X}}$, where
	$2^{\mathcal{X}}$ is the set of all subsets of $\mathcal{X}$,
	and $T_f(\lambda)$ is the set of the connected components
	of the upper-level set $\{x \in \mathcal{X}: f(x) \geq \lambda\}$.
	We define 
	the collection of connected components $\colltf$, 
	as $\colltf=\underset{\lambda}{\bigcup}~\treef(\lambda)$.
\end{definition}

As will be clearer in what follows, working only with cluster trees defined
via a function $f$ simplifies our search for metrics on trees, allowing
us to use metrics specified in terms of the function $f$. With a slight
abuse of notation, we will use $\treef$ to denote also $\colltf$, and write $\cluster \in \treef$ to signify $\cluster \in \colltf$.
The cluster tree $\treef$ indeed has a tree structure,
since for every pair $C_{1},C_{2}\in \treef$, either 
$C_{1}\subset C_{2}$, $C_{2}\subset C_{1}$, or $C_{1}\cap C_{2}=\emptyset$ holds.
See Figure~\ref{fig:ex} for
a graphical illustration of
a cluster tree.
The formal definition of the tree requires some topological theory; these details are in Appendix \ref{app:partial}.

In the context of hierarchical clustering, we are often interested in the ``height'' at
which two points or two clusters merge in the clustering. We introduce the merge height from  \cite[Definition 6]{eldridge2015beyond}:
\begin{definition}
	\label{def:mergeheight}
	For any two points $x,y\in\mathcal{X}$, any $f: \mathcal{X} \mapsto [0,\infty)$, and its tree $\treef$, their {\bf merge height} $m_{f}(x,y)$ is defined as the largest $\lambda$ such that $x$ and $y$ are in the same density cluster at level $\lambda$, i.e. 
	\[
	m_{f}(x,y) = \sup\left\{ \lambda\in\mathbb{R}:\,\text{there exists }C\in T_{f}(\lambda)\text{ such that }x,y\in C\right\} .
	\]
	We refer to the function $m_{f}: \mathcal{X} \times \mathcal{X} \mapsto \mathbb{R}$
	as the merge height function.
	For any two clusters $C_{1},C_{2}\in\colltf$, their merge height $m_{f}(C_{1},C_{2})$ is defined analogously, 
	\[
	m_{f}(C_{1},C_{2}) = \sup\left\{ \lambda\in\mathbb{R}:\,\text{there exists }C\in T_{f}(\lambda)\text{ such that }C_{1},C_{2}\subset C\right\} .
	\]
\end{definition}

One of the contributions of this paper is to construct valid confidence sets
for the unknown true tree and to develop methods for visualizing the trees
contained in this confidence set. Formally,
we assume that we have samples $\{X_1,\ldots,X_n\}$ from a distribution
$\mPstar$
with density $\densstar$.
\begin{definition}
	\label{def:valid}
	An asymptotic $(1 - \alpha)$ confidence set, $C_\alpha$, is a collection of trees with the
	property that
	\begin{align*}
	\mPstar(\treepstar \in C_{\alpha}) = 1 - \alpha + o(1).
	\end{align*}
\end{definition}
We also provide non-asymptotic upper bounds on the $o(1)$ term in the above definition. 
Additionally, we provide methods to summarize the confidence set above. In order 
to summarize the confidence set, we define a partial ordering on trees. 
\begin{definition}
	\label{def:partial}
	For any $f,g: \mathcal{X} \mapsto [0,\infty)$ and their trees $\treef$, $\treeg$, we say $\treef \preceq \treeg$ if there exists a map 
	$\Phi:\colltf \rightarrow \colltg$ such that for any $C_1,C_2\in \treef$, we have $C_{1}\subset C_{2}$ if and only if $\Phi(C_{1})\subset\Phi(C_{2})$.
\end{definition}
With Definition \ref{def:valid} and \ref{def:partial}, we describe the confidence set succinctly via some 
of the \emph{simplest} trees in the confidence set in Section \ref{sec:confidence_set}. Intuitively, these are trees without statistically insignificant splits.

It is easy to check that the partial order $\preceq$ in Definition \ref{def:partial} is reflexive 
(i.e. $T_{f}\preceq T_{f}$) and transitive (i.e. that $T_{f_{1}}\preceq T_{f_{2}}$ and $T_{f_{2}}\preceq T_{f_{3}}$ implies $T_{f_{1}}\preceq T_{f_{3}}$). 
However, 
to argue that $\preceq$ is a partial order, we need to show the antisymmetry, 
i.e. $T_{f}\preceq T_{g}$ and $T_{g}\preceq T_{f}$ implies that $T_{f}$ and $T_{g}$ are equivalent in some sense. 
In Appendices \ref{app:topology} and \ref{app:partial}, we show an important result:
for an appropriate topology on trees,
$T_{f}\preceq T_{g}$ and $T_{g}\preceq T_{f}$ implies that $\treef$ and $\treef$ are \emph{topologically equivalent}.

\begin{figure}
	\centering
	\begin{subfigure}{0.32\linewidth}
		\centering
		\includegraphics[scale=0.3]{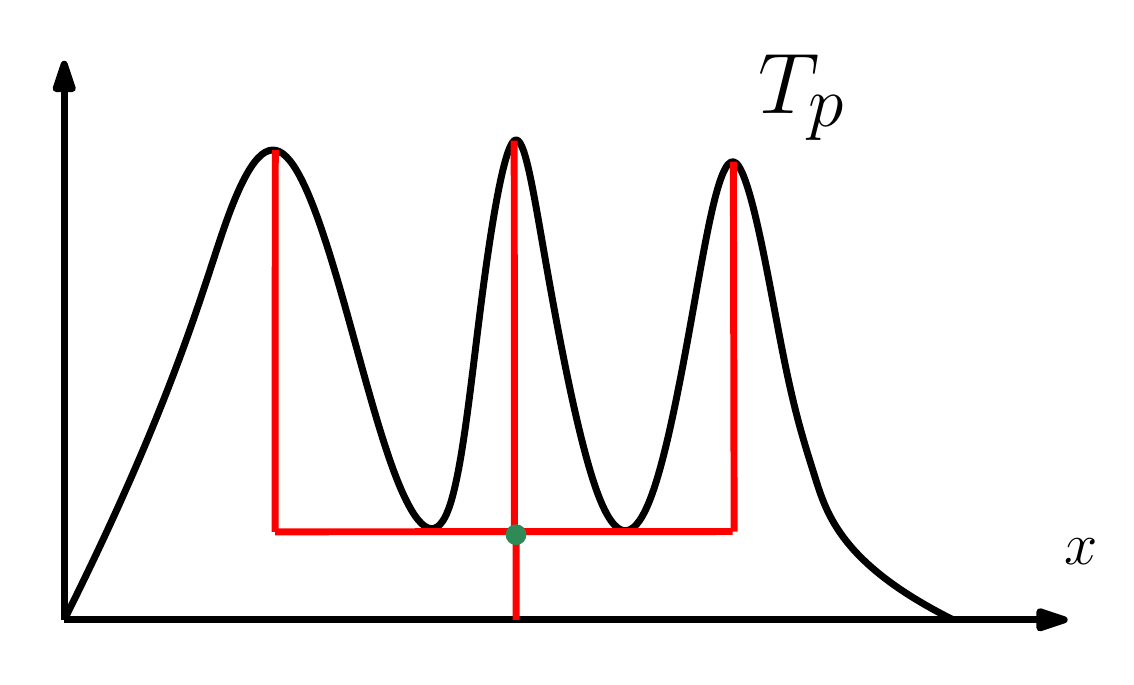}
		\caption{\label{subfig:partial_lessedge}}
	\end{subfigure}
	\begin{subfigure}{0.32\linewidth}			
		\centering
		\includegraphics[scale=0.3]{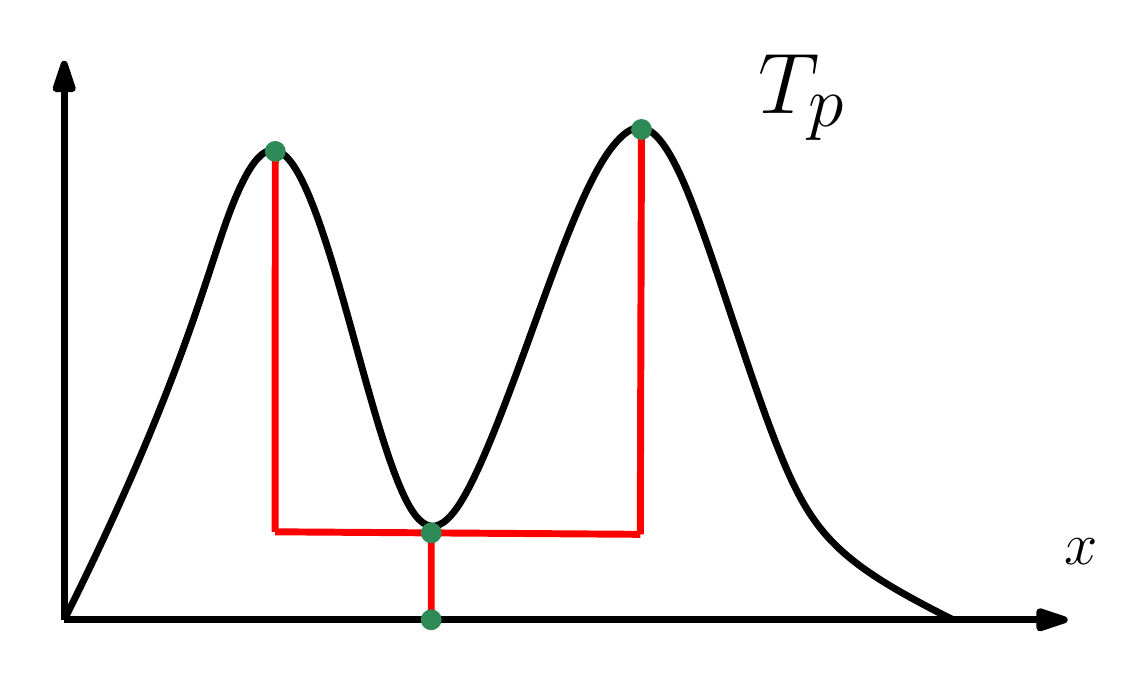}
		\caption{\label{subfig:partial_subtractedge}}
	\end{subfigure}
	\begin{subfigure}{0.32\linewidth}
		\centering
		\includegraphics[scale=0.3]{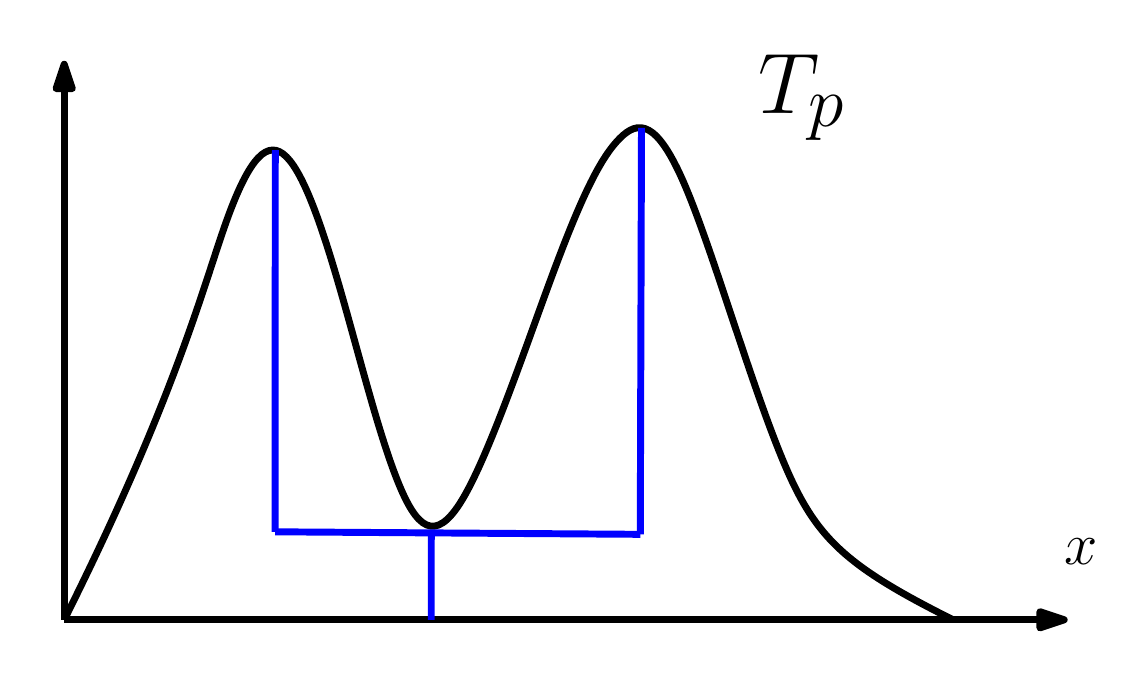}
		\caption{\label{subfig:partial_embeddedtree}}
	\end{subfigure}
	\\
	\begin{subfigure}{0.32\linewidth}
		\centering
		\includegraphics[scale=0.3]{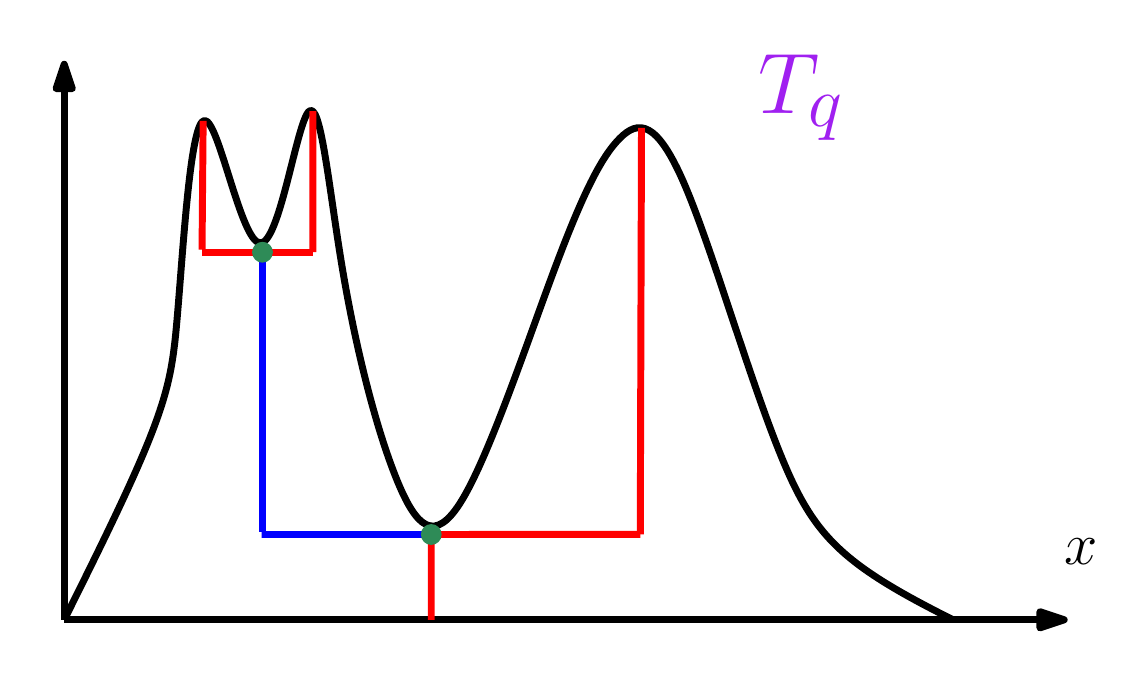}
		\caption{\label{subfig:partial_moreedge}}
	\end{subfigure}
	\begin{subfigure}{0.32\linewidth}
		\centering
		\includegraphics[scale=0.3]{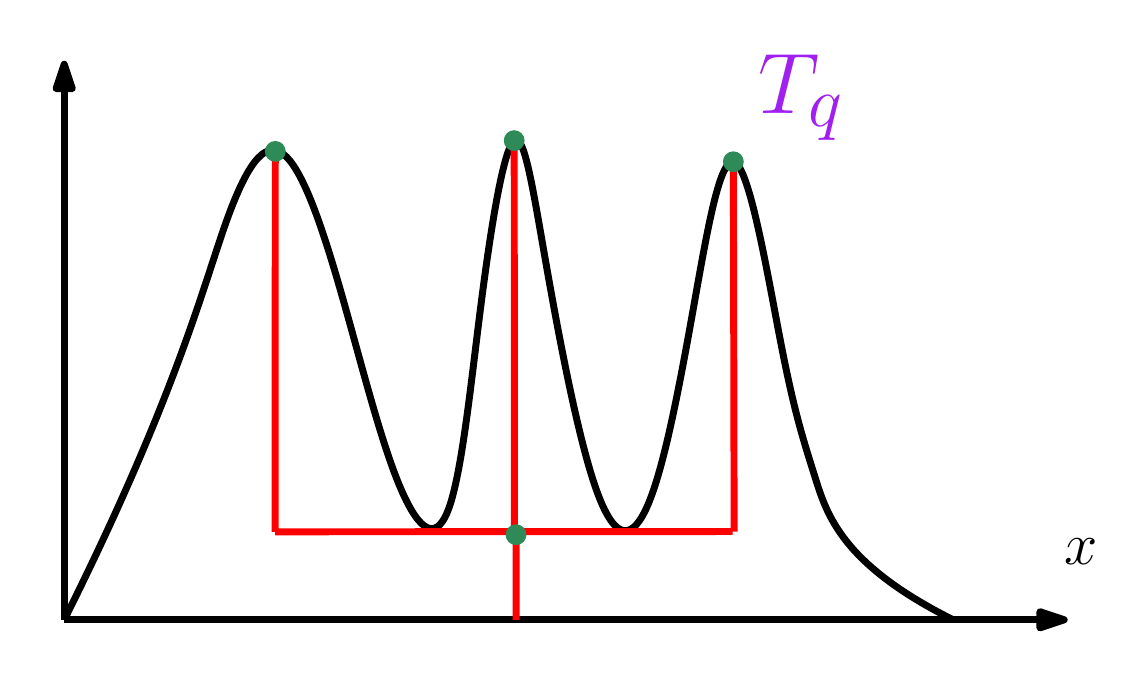}
		\caption{\label{subfig:partial_addedge}}
	\end{subfigure}
	\begin{subfigure}{0.32\linewidth}	
		\centering
		\includegraphics[scale=0.3]{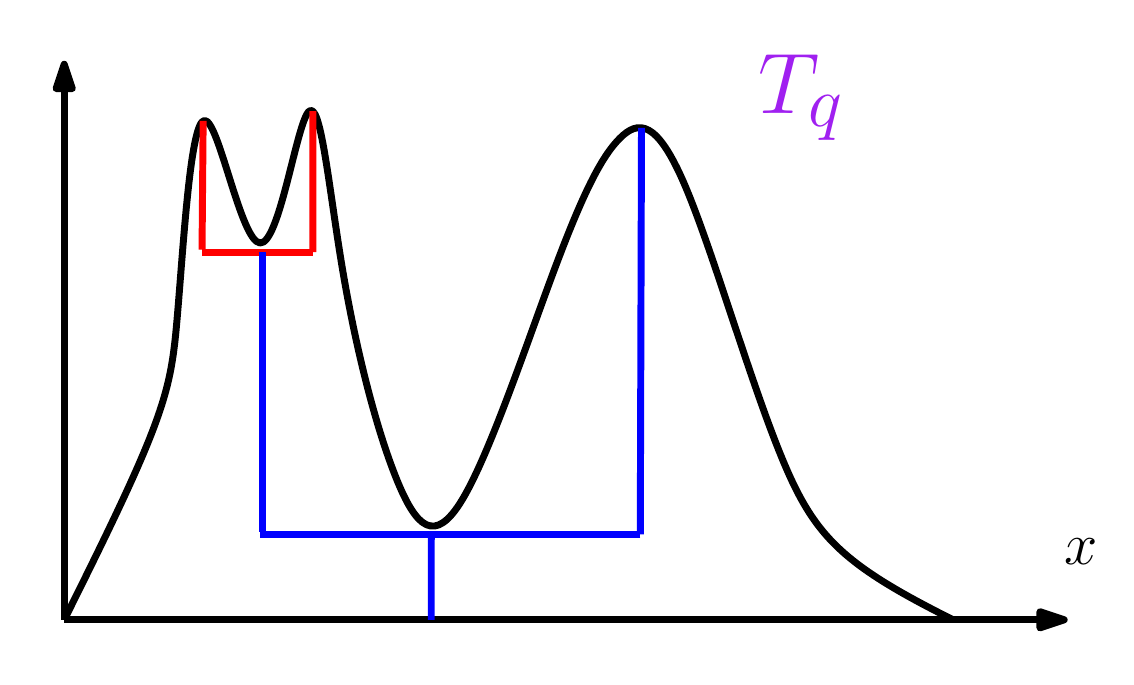}
		\caption{\label{subfig:partial_embeddingtree}}
	\end{subfigure}
	\caption{Three illustrations of the partial order $\preceq$ in Definition \ref{def:partial}.
		In each case, in agreement with our intuitive notion of simplicity, the tree on the top (\protect\subref{subfig:partial_lessedge}, \protect\subref{subfig:partial_subtractedge}, and \protect\subref{subfig:partial_embeddedtree}) is lower than the corresponding tree on the bottom(\protect\subref{subfig:partial_moreedge},  \protect\subref{subfig:partial_addedge}, and \protect\subref{subfig:partial_embeddingtree}) in the partial order, i.e. for each example $\treep \preceq \treeq$.
	}
	\label{fig:partial}
\end{figure}

The partial order $\preceq$ in
Definition \ref{def:partial} matches intuitive notions
of the complexity of the tree for several reasons (see Figure~\ref{fig:partial}).
Firstly, $T_{f}\preceq T_{g}$ implies 
$\text{(number of edges of }T_{f})\leq\text{(number of edges of }T_{g})$ (compare Figure~\ref{fig:partial}\subref{subfig:partial_lessedge} and \subref{subfig:partial_moreedge}, and see Lemma~\ref{lem:partial_edgenumber} in Appendix~\ref{app:partial}).
Secondly, if $T_{g}$ is obtained from $T_{f}$ by adding edges, then $T_{f}\preceq T_{g}$
(compare Figure~\ref{fig:partial}\subref{subfig:partial_subtractedge} and \subref{subfig:partial_addedge}, and see Lemma~\ref{lem:partial_insert} in Appendix~\ref{app:partial}).
Finally, the existence of a topology preserving embedding from 
$\colltf$ to $\colltg$
implies the relationship $\treef \preceq \treeg$ (compare Figure~\ref{fig:partial}\subref{subfig:partial_embeddedtree} and \subref{subfig:partial_embeddingtree}, and see Lemma~\ref{lem:partial_embedding} in Appendix~\ref{app:partial}).

\section{Tree Metrics}	
\label{sec:metric}
In this section, we introduce some natural metrics 
on cluster trees and study some of their properties that determine their
suitability for statistical inference. 
We let $p,q:\mathcal{X}\to[0,\infty)$ be nonnegative functions and let
$T_p$ and $T_q$ be the corresponding trees. 

\subsection{Metrics}
We consider three metrics on cluster trees, the first is the 
standard $\ellinf$ metric, while the second and third are metrics
that appear in the work of Eldridge et al. \cite{eldridge2015beyond}.

{\bf $\ellinf$ metric: } The simplest metric is
$d_\infty(T_p,T_q) = \|p-q\|_\infty = \sup_{x \in \mathcal{X}} | p(x) - q(x) |.$
We will show in what follows
that, in the context of statistical inference, this metric has several advantages
over other metrics.

{\bf Merge distortion metric: } The merge distortion metric intuitively 
measures the discrepancy in the merge height functions of
two trees in Definition \ref{def:mergeheight}. We consider the
{\em merge distortion metric} \cite[Definition 11]{eldridge2015beyond} defined by
$$
\dm(T_p,T_q) =\sup_{x,y \in \mathcal{X}} |m_p(x,y) - m_q(x,y)|.
$$
The merge distortion metric we consider is a special case of the metric introduced 
by \citet{eldridge2015beyond}\footnote{They further allow flexibility in 
	taking a $\sup$ over a subset of $\mathcal{X}$.}. 
The merge distortion metric was introduced by \citet{eldridge2015beyond} to study
the convergence of cluster tree estimators. They establish several interesting 
properties of the merge distortion metric: in particular, the metric is stable to 
perturbations in $\ellinf$, and further, that convergence in the merge distortion metric
strengthens previous notions of convergence of the cluster trees. 


{\bf Modified merge distortion metric: } 
We also consider the {\em modified
	merge distortion metric} given by
$$
\dmm(T_p,T_q) =\sup_{x,y \in \mathcal{X}} |d_{T_{p}}(x,y) - d_{T_{q}}(x,y)|,
$$
where $d_{T_{p}}(x,y) = p(x)+p(y) -2 m_p(x,y),$ which corresponds
to the (pseudo)-distance between $x$ and $y$ \emph{along} the tree. 
The metric $\dmm$ is used in various proofs in the work of \citet{eldridge2015beyond}. 
It is sensitive to both distortions of the merge heights in Definition \ref{def:mergeheight}, as well as of the underlying densities. 
Since the metric captures the distortion of distances between points along the tree, it is in some sense most closely aligned with the cluster tree. 
Finally, it is worth noting that unlike the interleaving distance and the functional distortion metric~\cite{bauer2015strong,morozov2013interleaving}, the three metrics we consider in this
paper are quite simple to approximate to a high-precision. 

\subsection{Properties of the Metrics}
The following Lemma gives some basic relationships between the three
metrics $d_\infty, \dm$ and $\dmm$. We 
define $p_{\inf}=\inf_{x \in \mathcal{X}} p(x)$, and $q_{\inf}$
analogously, and 
$a = \inf_{x \in \mathcal{X}} \{p(x) + q(x)\} - 2\min \{ p_{\inf},\,q_{\inf} \}$.
Note that when the Lebesgue measure $\mu(\mathcal{X})$ is infinite, then $p_{\inf} = q_{\inf} = a = 0$.

\begin{lemma}
	\label{lem:infty_M}
	For any densities $p$ and $q$, the following relationships hold:
	(i) When $p$ and $q$ are continuous, then $d_\infty(T_p,T_q) = \dm(T_p,T_q).$
	(ii) $\dmm(T_{p},T_{q})\leq4d_{\infty}(T_{p},T_{q}).$
	(iii) $\dmm(T_{p},T_{q})\geq d_{\infty}(T_{p},T_{q})-a$, where $a$ is defined as above. Additionally when  
	$\mu(\mathcal{X}) = \infty$, then $\dmm(T_{p},T_{q})\geq d_{\infty}(T_{p},T_{q})$.
\end{lemma}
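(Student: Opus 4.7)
The plan is to deduce all three claims from two elementary properties of the merge-height function $m_p$ from Definition \ref{def:mergeheight}. First, unwrapping the definition at $x=y$ yields the identity $m_p(x,x)=p(x)$, because the set of admissible levels collapses to $\{\lambda:p(x)\geq\lambda\}$. Second, there is the merge-height stability bound
\[
|m_p(x,y)-m_q(x,y)|\leq d_\infty(\treep,\treeq),
\]
proved by noting that any connected component $C\in T_p(\lambda)$ containing $x$ and $y$ is a connected subset of $\{q\geq\lambda-d_\infty\}$, hence lies in some connected component of that upper-level set, giving $m_q(x,y)\geq m_p(x,y)-d_\infty$; the reverse inequality follows by symmetry. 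Part (i) is then immediate: $\dm\leq d_\infty$ is the stability estimate, and restricting the supremum defining $\dm$ to the diagonal $x=y$ gives $\dm\geq\sup_x|m_p(x,x)-m_q(x,x)|=\sup_x|p(x)-q(x)|=d_\infty$. Continuity enters only to keep the level-set formalism coherent.

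Part (ii) is a one-line triangle inequality. Expand
\[
d_{T_p}(x,y)-d_{T_q}(x,y)=(p(x)-q(x))+(p(y)-q(y))+2(m_q(x,y)-m_p(x,y)),
\]
take absolute values, and combine $\|p-q\|_\infty\leq d_\infty$ with the stability bound to obtain $\dmm(\treep,\treeq)\leq 2d_\infty+2d_\infty=4d_\infty(\treep,\treeq)$.

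Part (iii) is the main technical step, and the idea is to make an asymmetric choice of test points. Pick $x^*$ (swapping the roles of $p$ and $q$ if necessary) with $p(x^*)-q(x^*)$ arbitrarily close to $d_\infty$, and pick $y^*$ with $p(y^*)+q(y^*)$ arbitrarily close to $\inf_z(p(z)+q(z))$. The key observation is that this choice forces $p(y^*)\leq p(x^*)$: summing
\[
p(y^*)+q(y^*)\leq p(x^*)+q(x^*)\quad\text{and}\quad p(y^*)-q(y^*)\leq d_\infty=p(x^*)-q(x^*)
\]
yields $2p(y^*)\leq 2p(x^*)$. Consequently $m_p(x^*,y^*)\leq\min(p(x^*),p(y^*))=p(y^*)$, while the connectedness of $\mathcal{X}$ at every level $\lambda<q_{\inf}$ gives $m_q(x^*,y^*)\geq q_{\inf}$. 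Plugging these into the same expansion used in Part (ii),
\[
d_{T_p}(x^*,y^*)-d_{T_q}(x^*,y^*)\geq d_\infty-(p(y^*)+q(y^*))+2q_{\inf}\geq d_\infty-\inf(p+q)+2\min\{p_{\inf},q_{\inf}\}=d_\infty-a.
\]
The infinite-measure case is immediate since then $a=0$. The main obstacle lies exactly in this asymmetric selection: recognizing that $y^*$ should minimize $p+q$ (rather than $p$ or $q$ alone), and verifying $p(y^*)\leq p(x^*)$, which is what lets the bound $m_p(x^*,y^*)\leq p(y^*)$ cancel the $p(y^*)$ in the expansion and leave the clean form $d_\infty-a$.
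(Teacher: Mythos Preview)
Your proposal is correct, and Parts (ii) and (iii) follow the paper's argument essentially verbatim. One minor remark on (iii): the verification that $p(y^*)\leq p(x^*)$ is unnecessary, since $m_p(x^*,y^*)\leq\min\{p(x^*),p(y^*)\}\leq p(y^*)$ holds unconditionally; the paper simply invokes $m_p(x,y)\leq p(y)$ without further comment. Your added check is harmless (though only approximately true once the $\epsilon$'s are tracked), but it is not where the work lies.

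Part (i) is where you genuinely diverge from the paper, and your route is cleaner. The paper obtains the lower bound $\dm\geq d_\infty$ by fixing $x$ with $|p(x)-q(x)|$ near $d_\infty$, using continuity of $p$ and $q$ to find a small ball $B(x,\delta)$ on which both are within $\epsilon/2$ of their values at $x$, and then taking $y\in B(x,\delta)$ so that $m_p(x,y)$ and $m_q(x,y)$ are pinned near $p(x)$ and $q(x)$ respectively. You instead restrict the supremum defining $\dm$ to the diagonal and use $m_p(x,x)=p(x)$ directly. This is shorter and, more interestingly, shows that the continuity hypothesis in (i) is superfluous: both the stability bound $\dm\leq d_\infty$ and your diagonal argument $\dm\geq d_\infty$ go through for arbitrary $p,q$. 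Your hedge that ``continuity enters only to keep the level-set formalism coherent'' undersells this; in fact continuity plays no role at all in your version of the proof.
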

The proof is in Appendix \ref{app:proof_metric_hadamard}.
From Lemma \ref{lem:infty_M}, we can see that under a mild assumption (continuity of the densities), 
$d_\infty$ and $\dm$ are equivalent. We note again that the work of \citet{eldridge2015beyond}
actually defines a family of merge distortion metrics, while we restrict our attention to a canonical one.
We can also see from Lemma \ref{lem:infty_M} that while the modified merge metric is not equivalent to $d_\infty$,
it is usually  multiplicatively sandwiched by $d_\infty$.



Our next line of investigation is aimed at assessing the suitability of the three metrics
for the task of statistical inference. Given the strong 
equivalence of $d_\infty$ and $\dm$ we focus our attention on $d_\infty$ and $\dmm$. 
Based on prior work (see \cite{chen2015density,cck}), the large sample behavior 
of $d_\infty$ is well understood. In particular, $d_\infty(\esttree,\treepstar)$ 
converges to the supremum of an appropriate Gaussian process, on the basis
of which we can construct confidence intervals for the $d_\infty$ metric.

The situation for the metric $\dmm$ is substantially more subtle.
One of our eventual goals is to use the non-parametric bootstrap to construct
valid estimates of the confidence set. 
In general, a way to assess the amenability 
of a functional to the bootstrap is via \emph{Hadamard differentiability} \cite{wellner2013weak}.
Roughly speaking, Hadamard-differentiability
is a type of {\em statistical stability}, that ensures that the functional
under consideration is stable to perturbations in the input distribution.
In Appendix~\ref{app:hadamard}, we formally define Hadamard differentiability
and prove that $\dmm$ is \emph{not} point-wise Hadamard differentiable. 
This does not completely rule out the possibility of finding a way to construct confidence sets based on $\dmm$,
but doing so would be difficult and so far we know of no way to do it.

In summary, based on computational considerations we eliminate the 
interleaving distance and the functional distortion 
metric~\cite{bauer2015strong,morozov2013interleaving}, we eliminate the $\dmm$
metric based on its unsuitability for statistical inference and focus the 
rest of our paper on the $d_\infty$ (or equivalently $\dm$) metric
which is both computationally tractable and has well understood 
statistical behavior.

\section{Confidence Sets}	
\label{sec:confidence_set}

In this section, we consider the construction of valid confidence intervals centered
around the kernel density estimator, defined in Equation~\eqref{eq:kde}.
%
We first observe that a fixed bandwidth for the KDE gives a dimension-free rate of convergence for estimating a cluster tree. For estimating a density in high dimensions, the KDE has a poor rate of convergence, due to a decreasing bandwidth for simultaneously optimizing the bias and the variance of the KDE.

When estimating a cluster tree, the bias of the KDE does not affect its cluster tree. Intuitively, the cluster tree is a shape characteristic of a function, which is not affected by the bias. 
Defining the \emph{biased} density, 
$p_h(x) = \mathbb{E} [\estdens(x)]$,
two cluster trees from $p_{h}$ and the true density $\densstar$ are equivalent
with respect to the topology in Appendix \ref{app:topology}, if $h$ is small
enough and $\densstar$ is regular enough:
\begin{lemma}
	\label{lem:morse}
	Suppose that the true unknown density $\densstar$, has no non-degenerate critical points
	\footnote{The
		Hessian of $\densstar$ at every critical point is non-degenerate. Such functions are known 
		as Morse functions.},
	then there exists a constant 
	$h_0 > 0$ such that for all
	$0 < h \leq h_0$, the two cluster trees, $\treepstar$ and $\treeph$ have the same topology in Appendix \ref{app:topology}.
\end{lemma}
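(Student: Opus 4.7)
The plan is to reduce the claim to a structural-stability statement about Morse functions, and then to use the fact that convolution with a smooth kernel converges in a sufficiently strong smoothness sense.

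First, I would recall (or invoke from the appendix) that for a Morse function on $\mathcal{X}$ the topology of its cluster tree is a finite combinatorial object, completely determined by the list of critical points together with the ordering of their critical values: local maxima correspond to leaves, index-$(d-1)$ saddles correspond to the internal vertices where two branches merge, and the nesting of connected components of upper level sets changes only at critical values. Thus two Morse functions $f,g$ produce topologically equivalent cluster trees in the sense of Appendix \ref{app:topology} as soon as there is a bijection between their critical points that preserves the index, respects the one-to-one correspondence of ``merging events,'' and preserves the order of critical values.

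Second, I would show that $p_h \to \densstar$ in the $C^2$ norm on $\mathcal{X}$ as $h \to 0$. Writing $p_h = K_h * \densstar$ for the rescaled kernel $K_h$, standard mollifier estimates under mild hypotheses on $K$ (e.g., a compactly supported, sufficiently smooth, symmetric kernel) give $\|p_h - \densstar\|_{C^2} \to 0$. This is the analytic workhorse of the argument.

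Third, I would apply the structural stability of Morse functions. Since $\densstar$ is Morse with non-degenerate Hessians at each critical point, there is a uniform lower bound on the magnitudes of gradients away from critical points and of the determinants of Hessians at critical points. Hence, for all sufficiently small $h$, the implicit function theorem (applied to $\nabla p_h = 0$ near each critical point of $\densstar$) produces a unique critical point of $p_h$ near each critical point of $\densstar$, with the same Morse index; furthermore, the uniform gradient bound rules out additional critical points. Continuity of the critical values in $h$, together with the fact that distinct critical values of $\densstar$ are separated by a positive gap, then guarantees that the ordering of critical values of $p_h$ matches that of $\densstar$ once $h \leq h_0$ for some $h_0 > 0$.

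Finally, combining the first and third steps with the chosen $h_0$ yields a bijection between the critical points of $\densstar$ and $p_h$ preserving index and ordering of critical values, which by the Morse-theoretic characterization recalled above shows that $\treepstar$ and $\treeph$ have the same topology. The main obstacle I anticipate is step three, specifically ruling out the appearance of spurious critical points of $p_h$ far from the critical points of $\densstar$; this requires the uniform $C^1$ control from step two together with a compactness or tail argument on $\mathcal{X}$, and an additional hypothesis (implicit in the paper) that $\densstar$ has only finitely many critical points and that its gradient is bounded away from zero outside their neighborhoods.
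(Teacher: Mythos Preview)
Your proposal and the paper's proof share the same two pillars --- $C^2$ convergence of $p_h$ to $\densstar$ and structural stability of Morse functions --- but they diverge in how the tree equivalence is extracted. The paper invokes the stronger packaged conclusion of structural stability: a $C^2$-small perturbation $q$ of a compactly supported Morse $p$ satisfies $q = h \circ p \circ \phi$ for a nondecreasing diffeomorphism $h$ of $\mathbb{R}$ and a diffeomorphism $\phi$ of the support (citing \cite[Lemma 16]{chazal2014robust}). From this conjugacy, the map $C \mapsto \phi^{-1}(C)$ immediately gives $T_p \preceq T_q$ and $T_q \preceq T_p$, and the paper then appeals to its own antisymmetry lemma for $\preceq$ (Lemma~\ref{lem:partial_equivalence}) to conclude topological equivalence. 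The analytic input is the standard bias bound $\|p_h - p\|_{2,\max} \leq C_1 h^2$, yielding the explicit threshold $h_0 = \sqrt{C_0/C_1}$.

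Your route --- tracking critical points individually via the implicit function theorem --- is more hands-on, and it has a genuine gap at the junction of your steps 1 and 3. In step 1 you correctly note that tree equivalence requires not only a bijection of critical points preserving index and value-ordering, but also that the ``merging events'' correspond. Step 3, however, only delivers the critical-point bijection; it says nothing about \emph{which} pair of components merges at \emph{which} saddle. In dimension $d \geq 2$ this is real extra content: two Morse functions can have identical lists of critical indices and identically ordered critical values yet different merge trees (e.g., three maxima and two index-$(d-1)$ saddles, with different pairings at the higher saddle). What closes this gap is precisely the diffeomorphism conjugacy the paper cites: once $q = h \circ p \circ \phi$, the entire level-set topology --- and hence the merging structure --- is transported by $\phi$. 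So either cite that conjugacy result directly, as the paper does, or add an argument (e.g., a homotopy through Morse functions with distinct critical values) that controls the connectivity of sublevel sets and not just the location of critical points.
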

From Lemma~\ref{lem:morse}, proved in Appendix~\ref{app:proof_confidence_set}, a fixed bandwidth for the KDE can be applied to give a dimension-free rate of convergence for estimating the cluster tree. Instead of decreasing bandwidth $h$ and inferring the cluster tree of the true density $\treepstar$ at rate $O_P(n^{-2/(4+d)})$, Lemma~\ref{lem:morse} implies that we can fix $h>0$ and infer the cluster tree of the biased density $\treeph$ at rate $O_P(n^{-1/2})$ \emph{independently of the dimension}. Hence a fixed bandwidth crucially enhances the convergence rate of the proposed methods in high-dimensional settings.

\subsection{A data-driven confidence set}

We recall that we base our inference on the $d_\infty$ metric, and we recall the definition 
of a valid confidence set (see Definition~\ref{def:valid}).
As a conceptual first step, suppose that for a specified value $\alpha$
we could compute the $1 - \alpha$ quantile of the distribution of 
$d_\infty(\esttree,\treeph)$, and denote this value $t_\alpha$. 
Then a valid confidence set for the unknown $\treeph$ is 
$C_\alpha = \{T: d_\infty(T,\esttree) \leq t_\alpha\}$.
To estimate $t_\alpha$,
we use the bootstrap.
Specifically, we generate $B$ bootstrap samples, $\{\tilde{X}_{1}^1,\cdots,\tilde{X}_{n}^1\}, \ldots,
\{\tilde{X}_{1}^B,\cdots,\tilde{X}_{n}^B\},$
by sampling with replacement from
the original sample. On each bootstrap sample, we compute the 
KDE, and the associated cluster tree. We denote the cluster
trees $\{\tilde{T}_{p_h}^1,\ldots,\tilde{T}_{p_h}^B\}$.
Finally, we estimate $t_\alpha$ by
\[
\hat{t}_\alpha =\hat{F}^{-1}(1-\alpha), \ \mbox{where} \quad \hat{F}(s) = \frac{1}{B}\sum_{i=1}^n \mathbb{I}(d_\infty(
\tilde{T}_{p_h}^i, \esttree)<s).
\]
Then the data-driven confidence set is
$\hat{C}_\alpha = \{T: d_\infty(T, \hat{T}_h)\leq \hat{t}_\alpha\}.$
Using techniques from \cite{cck,chen2015density},
the following can be shown (proof omitted):
\begin{theorem} 
	\label{thm:CI}
	Under mild regularity conditions on
	the kernel\footnote{See Appendix~\ref{app:regularity} for details.}, 
	we have that the constructed confidence set is asymptotically valid and satisfies,
	\[
	\mathbb{P}\left(T_h \in \hat{C}_\alpha\right) = 1-\alpha + O\Big(\Big(\frac{\log^7 n}{nh^{d}}\Big)^{1/6}\Big).
	\]
\end{theorem}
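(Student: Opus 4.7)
Since the $d_\infty$ metric on cluster trees coincides with $\|p - q\|_\infty$, the event $T_h \in \hat C_\alpha$ is simply $\|\hat p_h - p_h\|_\infty \leq \hat t_\alpha$. The plan is therefore to show that the bootstrap quantile $\hat t_\alpha$ of $d_\infty(\tilde T_{p_h}, \esttree) = \|\tilde p_h - \hat p_h\|_\infty$ accurately approximates the true $(1-\alpha)$-quantile of $\|\hat p_h - p_h\|_\infty$. This is a Gaussian and bootstrap approximation question for the supremum of the centered KDE empirical process, and I would invoke the high-dimensional CLT and empirical bootstrap machinery of \cite{cck}, as specialized to kernel density estimation in \cite{chen2015density}.

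First, I would write $\sqrt{n}(\hat p_h(x) - p_h(x)) = n^{-1/2}\sum_{i=1}^n g_{n,x}(X_i)$ with $g_{n,x}(y) = h^{-d} K(\|x-y\|/h) - p_h(x)$, and verify that under the kernel regularity assumed in Appendix~\ref{app:regularity} (boundedness, smoothness, and bounded variation or a similar VC-subgraph property), the class $\mathcal{G}_n = \{g_{n,x} : x \in \mathcal{X}\}$ is a VC-type class with envelope of order $h^{-d}$ and a uniform second-moment bound of order $h^{-d}$. Applying the Gaussian approximation theorem of \cite{cck} to this triangular array of processes yields a sequence of Gaussian processes $B_n$ with matching covariance such that
\[
\sup_{t \in \mathbb{R}} \left| \mathbb{P}\left(\sqrt{n}\|\hat p_h - p_h\|_\infty \leq t\right) - \mathbb{P}\left(\|B_n\|_\infty \leq t\right) \right| = O\!\left(\left(\frac{\log^7 n}{nh^d}\right)^{1/6}\right),
\]
which is exactly the source of the stated rate.

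Next, I would apply the empirical bootstrap consistency result of \cite{cck} to the same function class, giving, with probability $1 - o(1)$ over the data, an analogous Gaussian approximation (with the same rate) for the conditional distribution of $\sqrt{n}\|\tilde p_h - \hat p_h\|_\infty$. Combining the two approximations to the common Gaussian law and invoking the Gaussian anti-concentration inequality for suprema (also from \cite{cck}) converts distributional closeness into closeness of the quantiles $\hat t_\alpha$ and $t_\alpha$, and hence into closeness of coverage probabilities at the same $(\log^7 n/(nh^d))^{1/6}$ rate.

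The main obstacle is verifying the VC-type / envelope / variance conditions for $\mathcal{G}_n$ with the exact constants required to make the CCK rate kick in without extra logarithmic losses, together with the anti-concentration step that turns a Kolmogorov-distance bound on the distribution of the sup into a bound on the coverage of the quantile-based confidence set. These steps are technical but essentially mirror the KDE treatment in \cite{chen2015density}, which is presumably why the proof is omitted here.
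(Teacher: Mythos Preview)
Your proposal is correct and matches the paper's own approach: the paper explicitly omits the proof, stating only that it follows from the techniques of \cite{cck,chen2015density}, which is precisely the Gaussian approximation plus empirical bootstrap consistency plus anti-concentration argument you outline. The identification of $d_\infty$ on trees with $\|\cdot\|_\infty$ on densities and the reduction to the supremum of the centered KDE process is exactly the intended route.
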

Hence our data-driven confidence set is consistent at dimension independent rate. When $h$ is a fixed small constant, Lemma \ref{lem:morse} implies that $\treepstar$ and $\treeph$ have the same topology, and Theorem \ref{thm:CI} guarantees
that the non-parametric bootstrap is consistent at a dimension independent $O(((\log n)^7/n)^{1/6})$
rate. For reasons explained in \cite{cck}, this rate is believed to be optimal.

\subsection{Probing the Confidence Set}
\label{subsec:pruning}

The confidence set $\hat{C}_{\alpha}$ 
is an infinite set with a complex structure. 
Infinitesimal perturbations of the density estimate are in our confidence set and so this set
contains very complex trees.
One way to understand the structure of the confidence set is to focus attention on
simple trees in the confidence set. Intuitively, these trees only contain topological features 
(splits and branches) that are sufficiently strongly supported by the data.

We propose two \emph{pruning} schemes to find trees, that are simpler than the empirical 
tree $\esttree$ 
that are in the confidence set. Pruning the empirical tree aids visualization
as well as de-noises the empirical tree by eliminating some 
features that arise solely due to the stochastic variability of the finite-sample.
The algorithms are (see Figure~\ref{fig:prune}):\\
1. {\bf Pruning only leaves:} Remove all leaves of length less than $2\hat t_{\alpha}$  (Figure~\ref{fig:prune}\subref{subfig:prune_leaf}).\\
2. {\bf Pruning leaves and internal branches:} In this case, we first prune the leaves as above.
This yields a new tree. Now we again prune (using cumulative length) any leaf of length less than $2\hat t_{\alpha}$.
We continue iteratively until all remaining leaves are of cumulative length larger than $2\hat t_{\alpha}$ (Figure~\ref{fig:prune}\subref{subfig:prune_branch}).

In Appendix \ref{app:pruning} we formally define the pruning operation and
show the following.
The remaining tree $\tilde T$ after either of the above pruning operations satisfies:
(i) $\tilde T \preceq \esttree$, (ii)
there exists a function $f$ whose tree is $\tilde T$, and
(iii) $\tilde T\in \hat C_\alpha$ (see Lemma \ref{lem:pruning} in Appendix \ref{app:pruning}).
In other words,
we identified
a valid tree with a statistical guarantee that is simpler than the original estimate $\esttree$.
Intuitively, some of the statistically insignificant features have been removed from
$\esttree$.
We should point out, however, that there may exist other trees
that are simpler than $\esttree$ that are in
$\hat C_\alpha$.
Ideally, we would like to have an algorithm that identifies all trees in
the confidence set that are minimal with respect to the partial order $\preceq$ in Definition \ref{def:partial}.
This is an open question that we will address in future work.

\begin{figure}
	\centering
	\begin{subfigure}{0.32\linewidth}
		\centering
		\includegraphics[scale=0.5]{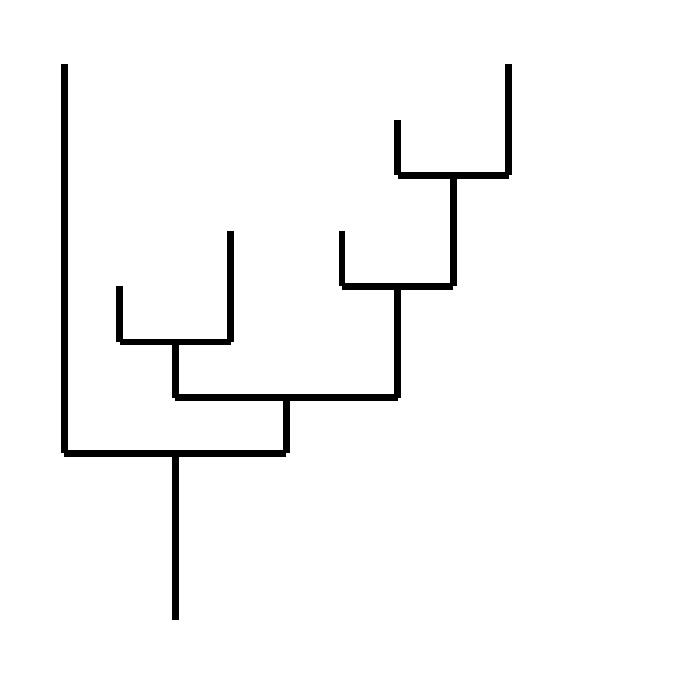}
		\caption{\label{subfig:prune_empirical} The empirical tree.}
	\end{subfigure}
	\begin{subfigure}{0.32\linewidth}	
		\centering
		\includegraphics[scale=0.5]{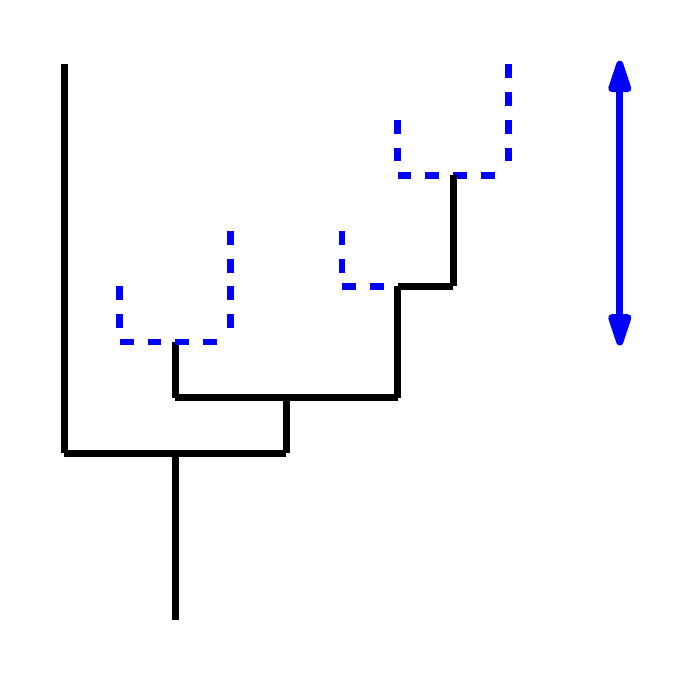}
		\caption{\label{subfig:prune_leaf} Pruning only leaves.}
	\end{subfigure}
	\begin{subfigure}{0.32\linewidth}	
		\centering
		\includegraphics[scale=0.5]{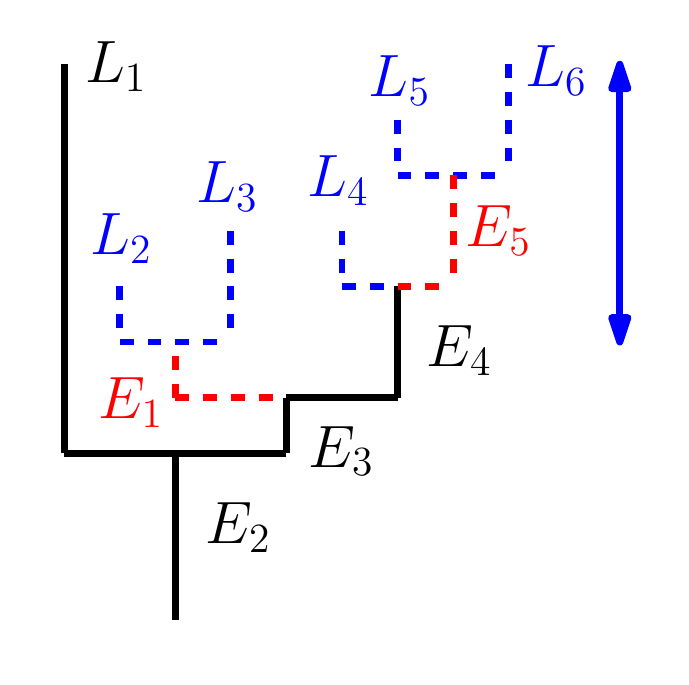}
		\caption{\label{subfig:prune_branch} Pruning leaves and branches.}
	\end{subfigure}
	\caption{Illustrations of our two pruning strategies. \subref{subfig:prune_empirical} shows the empirical
		tree. In \subref{subfig:prune_leaf}, leaves that are insignificant are pruned, while in \subref{subfig:prune_branch}, insignificant internal branches are further pruned top-down.}
	\label{fig:prune}
\end{figure}

\begin{figure}
	\centering
	\begin{subfigure}{0.32\linewidth}	
		\centering
		\includegraphics[width=4cm]{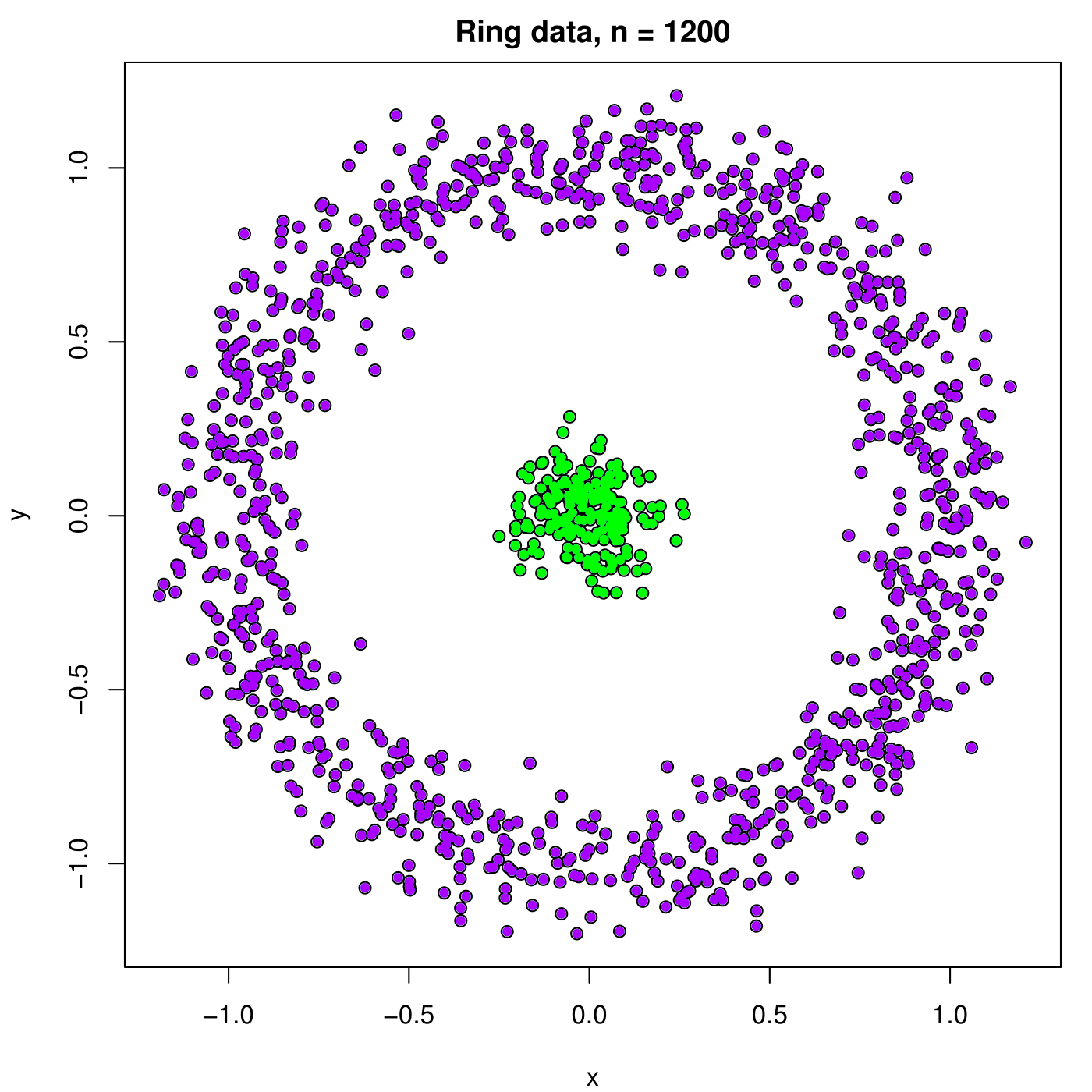}
		\caption{\label{subfig:sim_data_ring}}
	\end{subfigure}
	\begin{subfigure}{0.32\linewidth}	
		\centering
		\includegraphics[width=4cm]{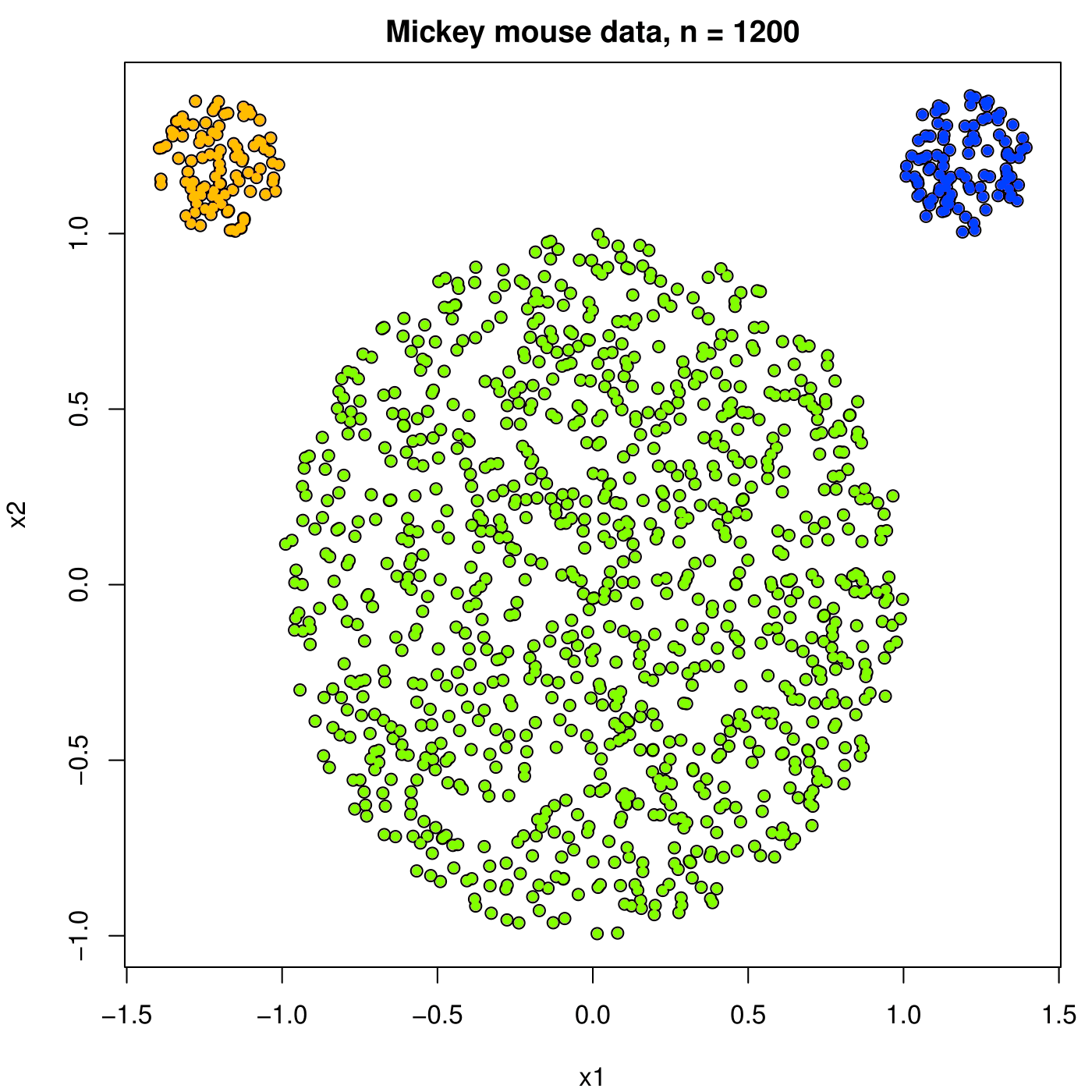}
		\caption{\label{subfig:sim_data_mouse}}
	\end{subfigure}
	\begin{subfigure}{0.32\linewidth}	
		\centering
		\includegraphics[width=4cm]{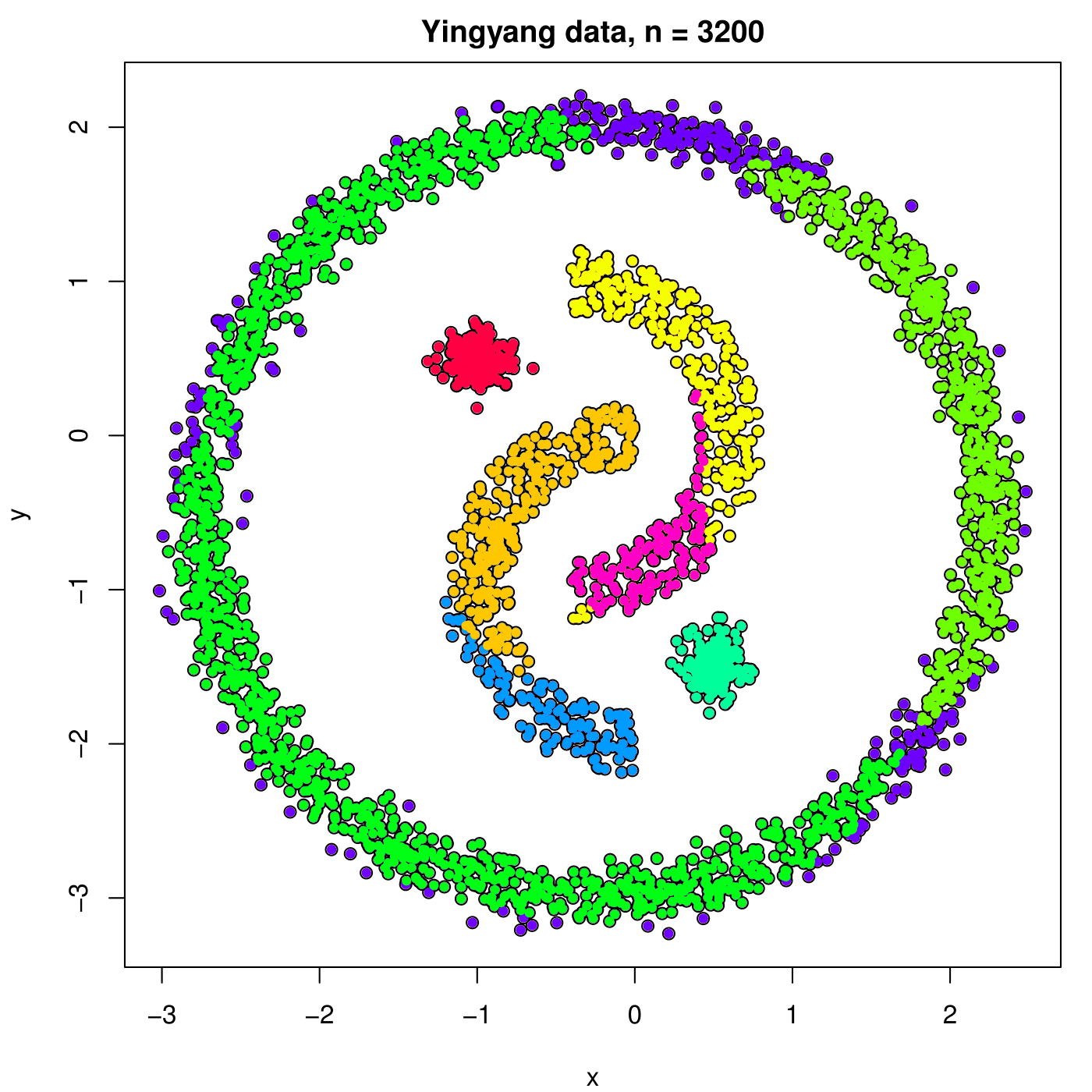}
		\caption{\label{subfig:sim_data_yingyang}}
	\end{subfigure}
	\\
	\begin{subfigure}{0.32\linewidth}	
		\centering
		\includegraphics[width=4cm]{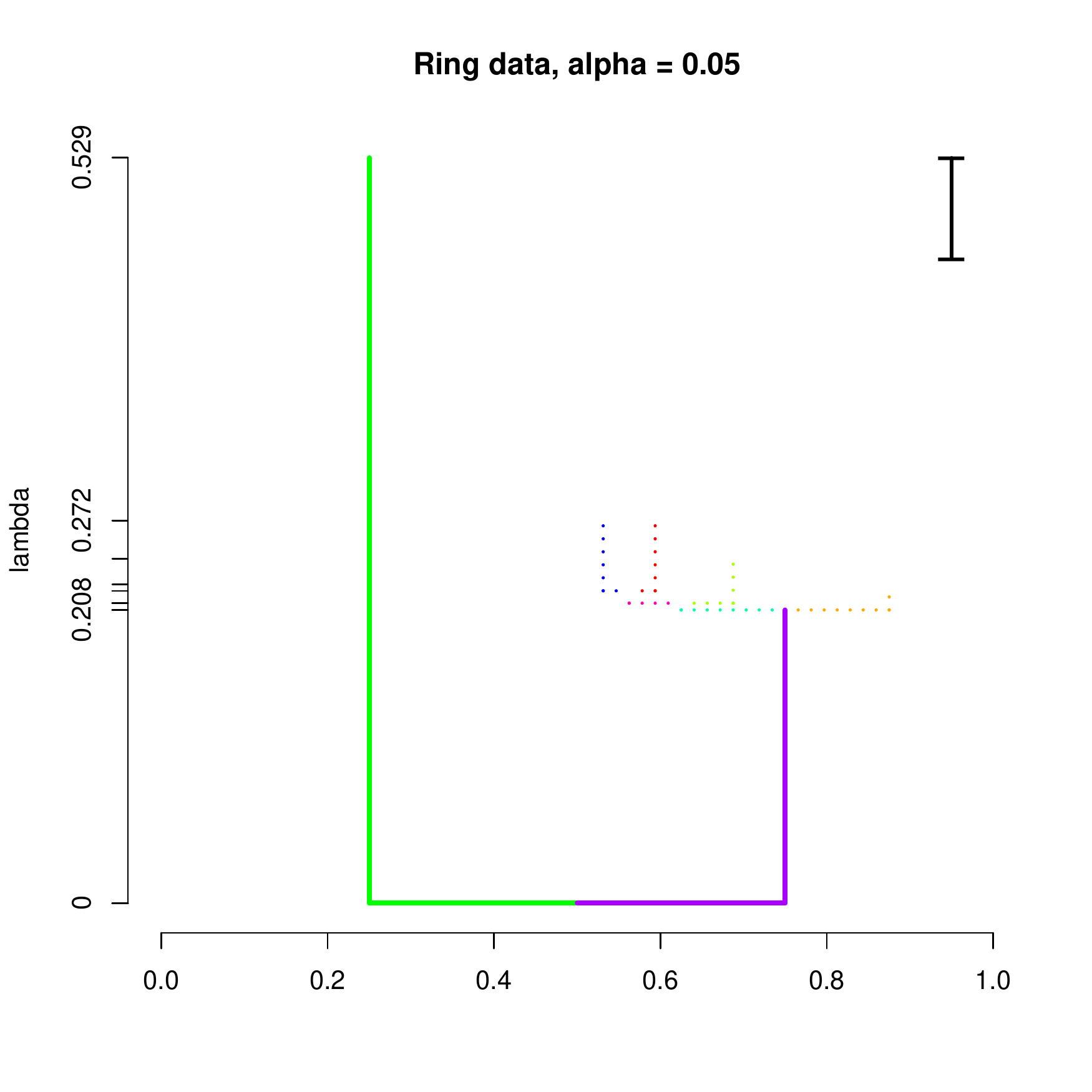}
		\caption{\label{subfig:sim_tree_ring}}
	\end{subfigure}
	\begin{subfigure}{0.32\linewidth}	
		\centering
		\includegraphics[width=4cm]{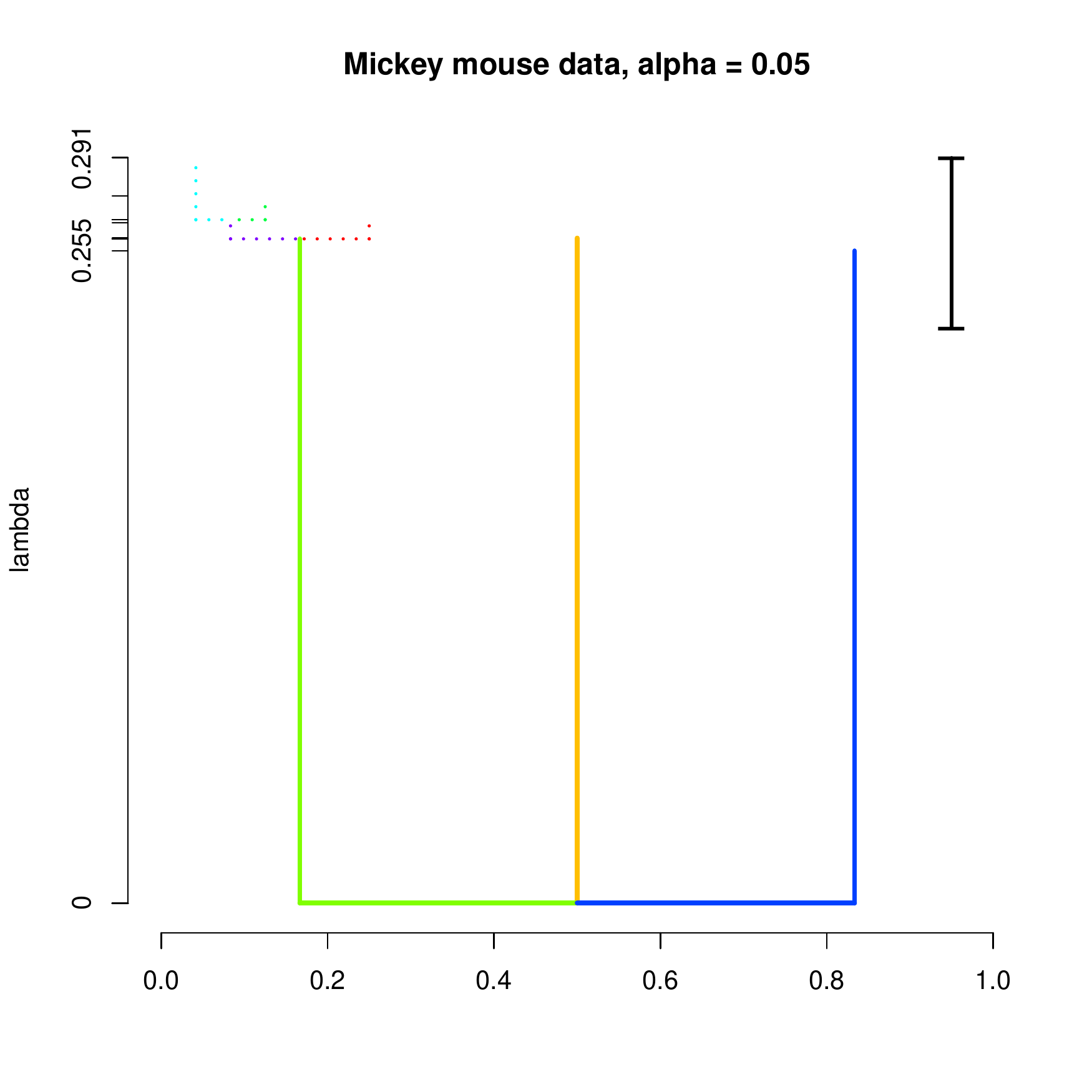}
		\caption{\label{subfig:sim_tree_mouse}}
	\end{subfigure}
	\begin{subfigure}{0.32\linewidth}	
		\centering
		\includegraphics[width=4cm]{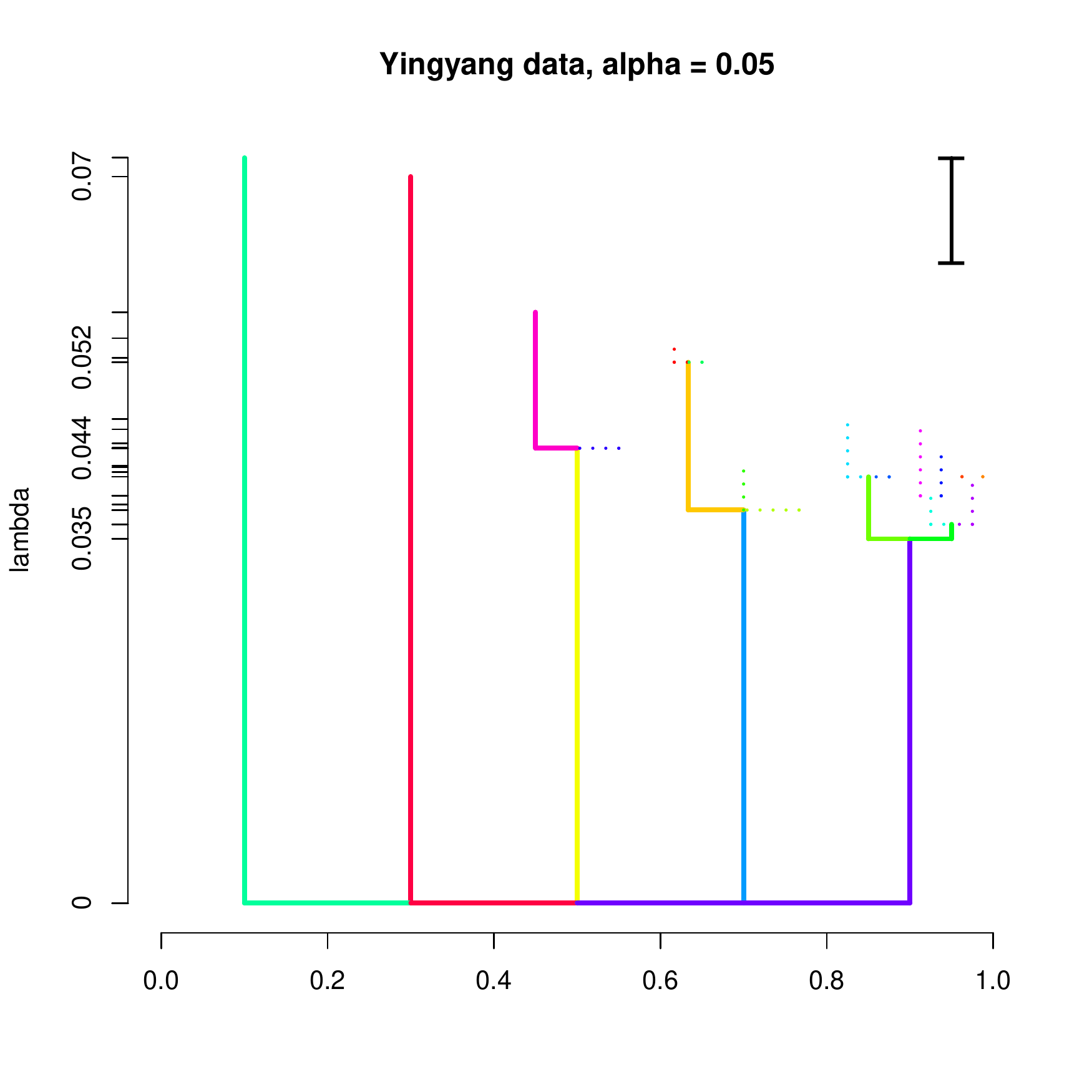}
		\caption{\label{subfig:sim_tree_yingyang}}
	\end{subfigure}
	\caption{Simulation examples.
		\subref{subfig:sim_data_ring} and \subref{subfig:sim_tree_ring} are the ring data; \subref{subfig:sim_data_mouse} and \subref{subfig:sim_tree_mouse} are the mickey mouse data;
		\subref{subfig:sim_data_yingyang} and \subref{subfig:sim_tree_yingyang} are the yingyang data. 
		The solid lines are the pruned trees; 
		the dashed lines are leaves (and edges) removed
		by the pruning procedure.
		A bar of length $2\hat{t}_\alpha$ is at the top right corner.
		The pruned trees recover the actual structure of connected components.
	}
	\label{fig:sim}
\end{figure}

\section{Experiments}
\label{sec:examples}
In this section, we demonstrate the techniques we have developed
for inference on synthetic data, as well as on a real dataset.
\subsection{Simulated data}

We consider three simulations: the ring data (Figure~\ref{fig:sim}\subref{subfig:sim_data_ring} and \subref{subfig:sim_tree_ring}),
the Mickey Mouse data (Figure~\ref{fig:sim}\subref{subfig:sim_data_mouse} and \subref{subfig:sim_tree_mouse}), 
and the yingyang data (Figure~\ref{fig:sim}\subref{subfig:sim_data_yingyang} and \subref{subfig:sim_tree_yingyang}).
The smoothing bandwidth is chosen by the Silverman reference rule \cite{silverman1986density}
and we pick the significance level $\alpha= 0.05$.

{\bf Example 1: The ring data.}  (Figure~\ref{fig:sim}\subref{subfig:sim_data_ring} and \subref{subfig:sim_tree_ring})
The ring data consists of two structures: an outer ring and a center node.
The outer circle consists of $1000$ points and the central node contains $200$ points.
To construct the tree, we used $h=0.202$. 

{\bf Example 2: The Mickey Mouse data.} (Figure~\ref{fig:sim}\subref{subfig:sim_data_mouse} and \subref{subfig:sim_tree_mouse})
The Mickey Mouse data has three components: the top left and right uniform circle ($400$ points each)
and the center circle ($1200$ points).
In this case, we select $h=0.200$. 

{\bf Example 3: The yingyang data.} (Figure~\ref{fig:sim}\subref{subfig:sim_data_yingyang} and \subref{subfig:sim_tree_yingyang})
This data has $5$ connected components:
outer ring ($2000$ points), the two moon-shape regions 
($400$ points each), and the two nodes ($200$ points each).
We choose $h=0.385$. 

Figure~\ref{fig:sim} shows those data (\subref{subfig:sim_data_ring}, \subref{subfig:sim_data_mouse}, and \subref{subfig:sim_data_yingyang}) along with 
the pruned density trees (solid parts in \subref{subfig:sim_tree_ring}, \subref{subfig:sim_tree_mouse}, and \subref{subfig:sim_tree_yingyang}).
Before pruning the tree (both solid and dashed parts), there are 
more leaves than the actual number of connected components. 
But after pruning (only the solid parts), every leaf corresponds to an actual connected component.
This demonstrates the power of a good pruning procedure.

\subsection{GvHD dataset}

\begin{figure}
	\centering
	\begin{subfigure}{0.4\linewidth}
		\centering	
		\includegraphics[height=1.7 in]{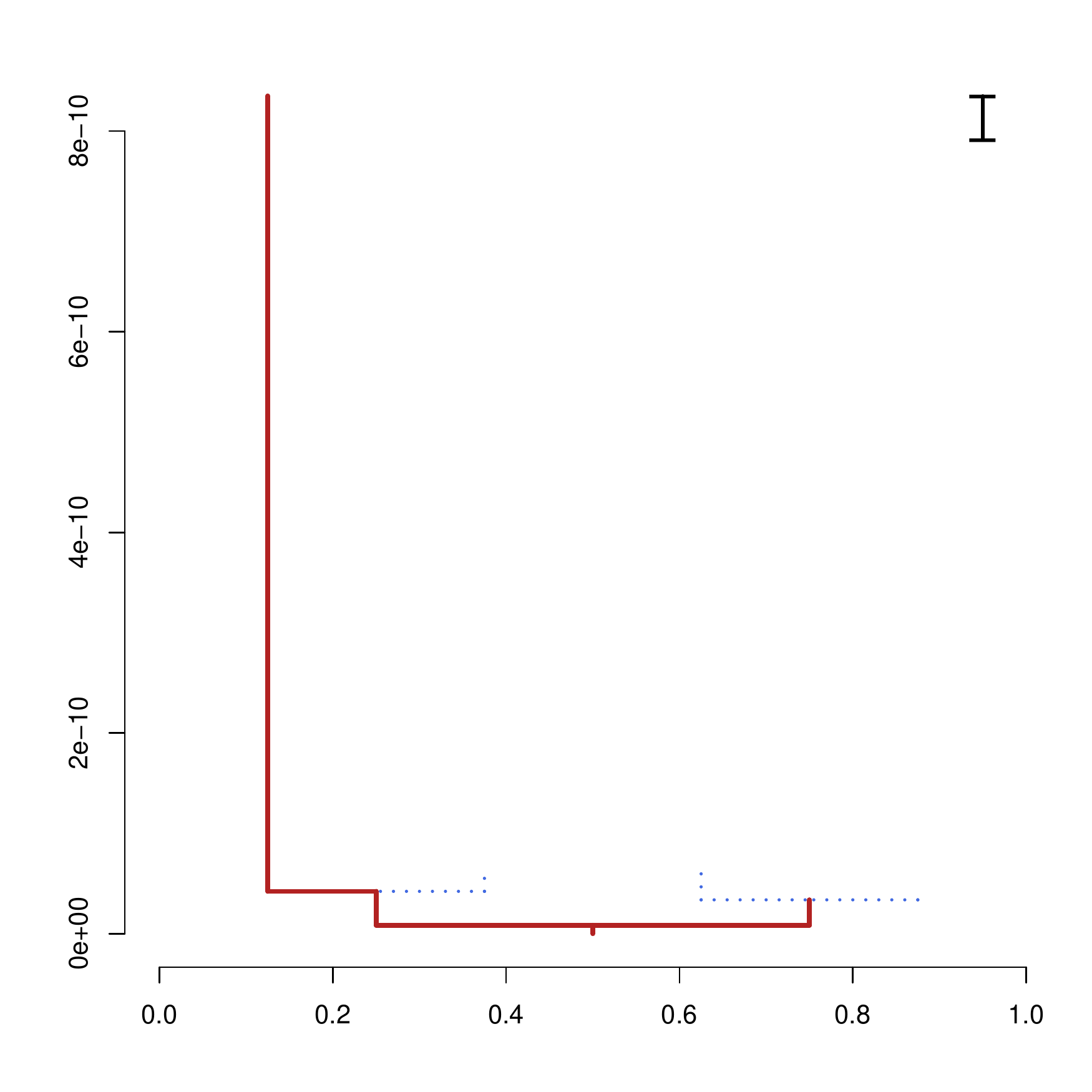}
		\caption{\label{subfig:GvHD_positive} The positive treatment data.}
	\end{subfigure}
	\begin{subfigure}{0.4\linewidth}	
		\centering
		\includegraphics[height=1.7 in]{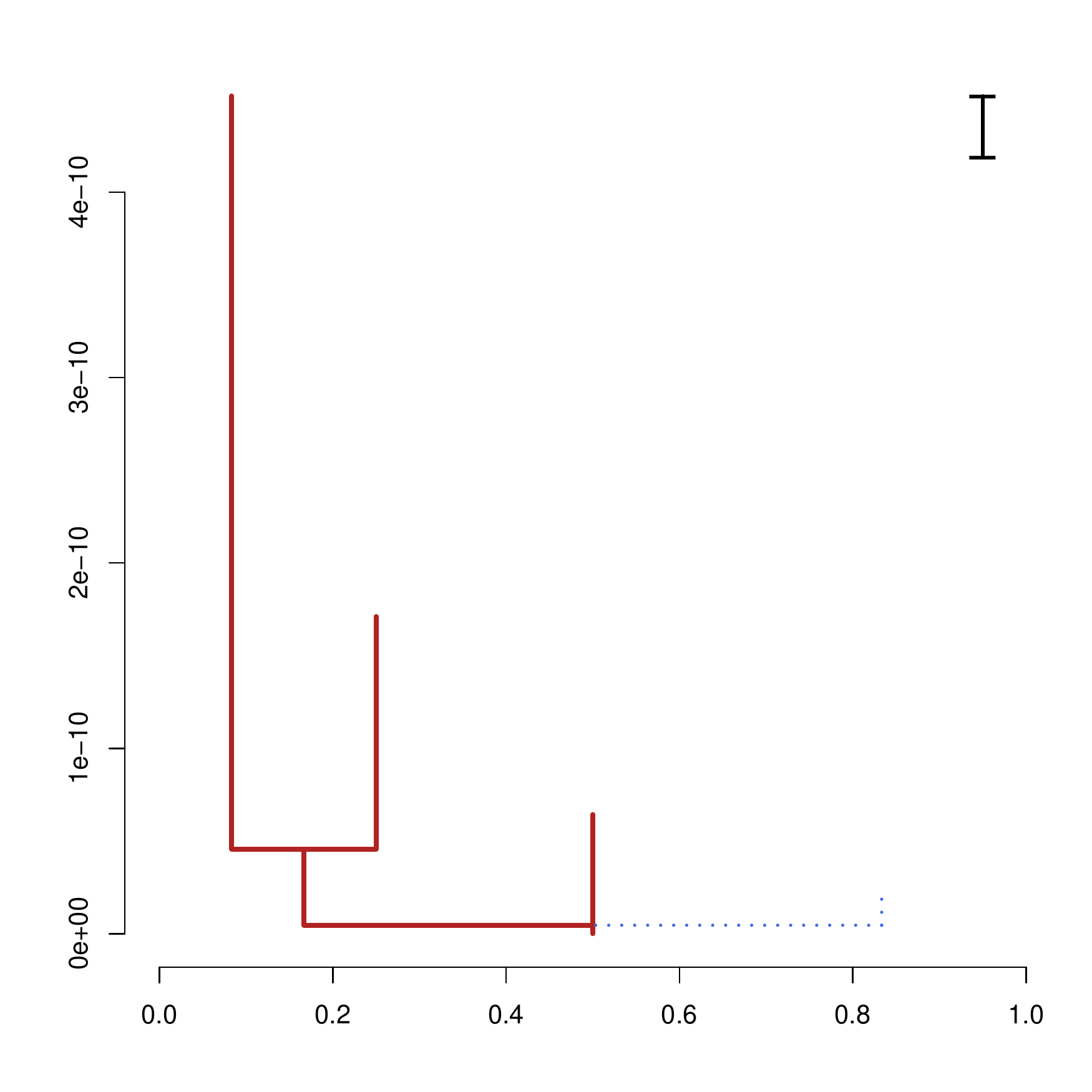}
		\caption{\label{subfig:GvHD_control} The control data.}
	\end{subfigure}
	\caption{The GvHD data.
		The solid brown lines are the remaining branches after pruning;
		the blue dashed lines are the pruned leaves (or edges).
		A bar of length $2\hat{t}_\alpha$ is at the top right corner.}
	\label{fig:GvHD}
\end{figure}

Now we apply our method to the GvHD (Graft-versus-Host Disease) dataset \cite{brinkman2007high}.
GvHD is a complication that may occur when transplanting bone marrow or stem cells
from one subject to another \cite{brinkman2007high}. 
We obtained the GvHD dataset from R package `mclust'. 
There are two subsamples: the control sample and the positive (treatment) sample.
The control sample consists of $9083$ observations and the positive sample contains $6809$ observations 
on $4$ biomarker measurements ($d=4$). 
By the normal reference rule \cite{silverman1986density}, 
we pick $h = 39.1$ for the positive sample
and $h = 42.2$ for the control sample.
We set the significance level $\alpha=0.05$.

Figure~\ref{fig:GvHD} shows the density trees in both samples.
The solid brown parts are the remaining components of density trees after pruning 
and the dashed blue parts are the branches removed by pruning. 
As can be seen, the pruned density tree of the positive sample (Figure~\ref{fig:GvHD}\subref{subfig:GvHD_positive}) is quite
different from the pruned tree of the control sample (Figure~\ref{fig:GvHD}\subref{subfig:GvHD_control}).
The density function of the positive sample has fewer bumps ($2$ significant leaves)
than the control sample ($3$ significant leaves).
By comparing the pruned trees, we can see how the two distributions differ from each other.

\section{Discussion}
\label{sec:discussion}

There are several open questions that we will address in future work.
First, it would be useful to have an algorithm that can find all trees in the confidence set that
are minimal with respect to the partial order $\preceq$.
These are the simplest trees consistent with the data.
Second, we would like to find a way to derive valid confidence sets using the metric $\dmm$
which we view as an appealing metric for tree inference.
Finally, 
we have used the Silverman reference rule \cite{silverman1986density} for choosing the bandwidth
but we would like to find a bandwidth selection method that is more targeted to tree inference.

\newpage

{\small
	\bibliographystyle{abbrvnat}
	\bibliography{nips_tree}}

\clearpage 

\appendix

\section{Topological Preliminaries}
\label{app:topology}

The goal of this section is to define an appropriate topology on the
cluster tree $\treef$ in Definition \ref{def:tree}. Defining
an appropriate topology for the cluster tree $\treef$ is important in
this paper for several reasons: (1) the topology gives geometric
insight for the cluster tree, (2) homeomorphism (topological
equivalence) is connected to equivalence in the partial order
$\preceq$ in Definition \ref{def:partial}, and (3) the topology gives
a justification for using a fixed bandwidth $h$ for constructing confidence
set $\hat{C}_{\alpha}$ as in Lemma \ref{lem:morse} to obtain faster
rates of convergence.

We construct the topology of the cluster tree $\treef$ by imposing
a topology on the corresponding collection of connected components
$\colltf$ in Definition \ref{def:tree}. For defining a topology on
$\colltf$, we define the tree distance function $d_{T_{f}}$ in
Definition \ref{def:treedistance}, and impose the metric topology induced
from the tree distance function. Using a distance function for topology
not only eases formulating topology but also enables us to inherit all
the good properties of the metric topology.

The desired tree distance function
$d_{T_{f}}:\colltf\times\colltf\to[0,\infty)$ is based on the merge
height function $m_{f}$ in Definition \ref{def:mergeheight}. For
later use in the proof, we define the tree distance function
$d_{T_{f}}$ on both $\mathcal{X}$ and $\colltf$ as follows:

\begin{definition}
	\label{def:treedistance}
	Let $f:\mathcal{X}\to[0,\infty)$ be a function, and $T_{f}$ be its cluster tree in Definition \ref{def:tree}.
	For any two points $x,y\in\mathcal{X}$, the tree distance function $d_{T_{f}}:\mathcal{X}\times\mathcal{X}\to[0,\infty)$ of $T_{f}$ on $\mathcal{X}$ is defined as 
	\begin{align*}
	d_{T_{f}}(x,y)=f(x)+f(y)-2m_{f}(x,y). 
	\end{align*}
	Similarly, for any two clusters $C_{1}, C_2 \in \colltf$, we first define $\lambda_1 = \sup
	\{\level: C_1 \in \treef(\level)\}$, and $\lambda_2$ analogously. We then define the tree distance function $d_{T_{f}}:\colltf\times\colltf\to[0,\infty)$ of $T_{f}$ on $\mathcal{X}$ as: 
	\begin{align*}
	d_{T_{f}}(C_{1},C_{2})=\lambda_{1}+\lambda_{2}-2m_{f}(C_{1},C_{2}).
	\end{align*}
\end{definition}
The tree distance function $d_{T_{f}}$ in Definition \ref{def:mergeheight} is a pseudometric on $\mathcal{X}$ and is a metric on $\colltf$ as desired, proven in Lemma \ref{lem:treedistance_metric}. The proof is given later in Appendix \ref{app:proof_topology_partial}.
\begin{lemma}
	\label{lem:treedistance_metric}
	Let $f:\mathcal{X}\to[0,\infty)$ be a function, $T_{f}$ be its cluster tree in Definition \ref{def:tree}, and $d_{T_{f}}$ be its tree distance function in Definition \ref{def:treedistance}. Then $d_{T_{f}}$ on $\mathcal{X}$ is a pseudometric and $d_{T_{f}}$ on $\colltf$ is a metric.
\end{lemma}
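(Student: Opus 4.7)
The plan is to verify the defining axioms of a pseudometric (respectively metric) directly from the definitions of $m_f$ and $d_{T_f}$. Symmetry is immediate since $m_f$ is symmetric in its two arguments. For vanishing on the diagonal, $m_f(x,x) = f(x)$, because the largest $\lambda$ for which $x$ lies in some cluster at level $\lambda$ is exactly $f(x)$; hence $d_{T_f}(x,x) = 2f(x) - 2f(x) = 0$.

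For non-negativity, I would first establish the elementary bound $m_f(x,y) \leq \min\{f(x), f(y)\}$: if $x, y \in C$ for some $C \in T_f(\lambda)$, then $f(x), f(y) \geq \lambda$ by definition of the upper level set, so every admissible $\lambda$ in the sup is bounded by $\min\{f(x), f(y)\}$. Consequently $f(x) + f(y) \geq 2 m_f(x,y)$, giving $d_{T_f}(x,y) \geq 0$. The core step is the triangle inequality, which algebraically reduces to
\[
m_f(x,z) \geq m_f(x,y) + m_f(y,z) - f(y).
\]
Since $m_f(x,y), m_f(y,z) \leq f(y)$ by the bound just established, it suffices to prove the stronger ultrametric-type inequality $m_f(x,z) \geq \min\{m_f(x,y), m_f(y,z)\}$. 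For any $\lambda' < \min\{m_f(x,y), m_f(y,z)\}$, there exist $C_1, C_2 \in T_f(\lambda')$ with $\{x,y\} \subset C_1$ and $\{y,z\} \subset C_2$; since two connected components of the same upper level set are either equal or disjoint and $y \in C_1 \cap C_2$, they coincide, placing $x$ and $z$ in a common cluster at level $\lambda'$. Hence $m_f(x,z) \geq \lambda'$, and letting $\lambda' \uparrow \min\{m_f(x,y), m_f(y,z)\}$ yields the inequality.

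For the cluster version on $\colltf$, each of the above axioms transfers verbatim with $f(x), f(y)$ replaced by $\lambda_1, \lambda_2$ and points by clusters, once one verifies the analogous bound $m_f(C_1, C_2) \leq \min\{\lambda_1, \lambda_2\}$ and the corresponding ultrametric inequality (both by the same argument, using that $C_i \subset C$ for $C \in T_f(\lambda)$ forces $\lambda \leq \lambda_i$). The genuinely new obligation is the identity of indiscernibles. If $d_{T_f}(C_1, C_2) = 0$, then $\lambda_1 + \lambda_2 = 2m_f(C_1, C_2)$, which together with $m_f(C_1, C_2) \leq \min\{\lambda_1, \lambda_2\}$ forces $\lambda_1 = \lambda_2 = m_f(C_1, C_2) =: \lambda$. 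I would then argue that at level $\lambda$ both $C_1$ and $C_2$ must equal the common enclosing component $C$, since two components of the same level set that are nested in a third must coincide.

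The main obstacle is the careful handling of whether the suprema in the definitions of $\lambda_i$ and $m_f(C_1, C_2)$ are attained. Treating this cleanly is precisely the role of the topological framework developed in Appendix~\ref{app:topology}, and for the identity-of-indiscernibles step one likely needs a short limiting argument: approximate the sup from below by a sequence $\lambda_n \uparrow \lambda$, track the enclosing components at each $\lambda_n$, and pass to the limit to conclude that $C_1$ and $C_2$ coincide as subsets of $\mathcal{X}$.
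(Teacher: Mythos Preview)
Your proposal is correct and follows essentially the same route as the paper: non-negativity via $m_f(x,y)\le\min\{f(x),f(y)\}$, the triangle inequality via the ultrametric bound $m_f(x,z)\ge\min\{m_f(x,y),m_f(y,z)\}$ combined with $\max\{m_f(x,y),m_f(y,z)\}\le f(y)$, and identity of indiscernibles on $\colltf$ by forcing $\lambda_1=\lambda_2=m_f(C_1,C_2)$ and then using that distinct components at the same level are disjoint. If anything, you are more careful than the paper: where the paper simply asserts the existence of a containing cluster at the exact supremum level, you correctly flag that the sup need not be attained and propose the appropriate $\lambda'\uparrow$ limiting argument.
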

From the metric $d_{T_{f}}$ on $\colltf$ in Definition \ref{def:treedistance}, we impose the induced metric topology on $\colltf$. We say $T_{f}$ is homeomorphic to $T_{g}$, or
$T_{f}\cong T_{g}$, when their corresponding collection of connected components are homeomorphic,
i.e. $\colltf\cong \colltg$.  (Two spaces are homeomorphic if there
exists a bijective continuous function between them, with a continuous inverse.)

To get some geometric understanding of the cluster tree in Definition
\ref{def:tree}, we identify edges that constitute the cluster
tree. Intuitively, edges correspond to either leaves or internal
branches. An edge is roughly defined as a set of clusters whose
inclusion relationship with respect to clusters outside an edge are
equivalent, so that when the collection of connected components is
divided into edges, we observe the same inclusion relationship between
representative clusters whenever any cluster is selected as
a representative for each edge.

For formally defining edges, we define an interval in the cluster
tree and the equivalence relation in the cluster tree. For any two
clusters $A,B\in\colltf$, the interval $[A,B]\subset\colltf$ is
defined as a set clusters that contain $A$ and are contained in $B$,
i.e.
\[
[A,B]:=\left\{ C\in\colltf:\,A\subset C\subset B\right\} ,
\]
The equivalence relation $\sim$ is defined as $A\sim B$ if and only if their inclusion relationship with respect to clusters outside $[A,B]$ and $[B,A]$, i.e.
\begin{align*}
& A\sim B\text{ if and only if}\\
& \text{for all }C\in\colltf\text{ such that }C\notin[A,B]\cup[B,A],\text{ }C\subset A\text{ iff }C\subset B\text{ and }A\subset C\text{ iff }B\subset C.
\end{align*}
Then it is easy to see that the relation $\sim$ is reflexive($A\sim
A$), symmetric$(A\sim B$ implies $B\sim A$), and transitive ($A\sim B$
and $B\sim C$ implies $A\sim C$). Hence the relation $\sim$ is indeed
an equivalence relation, and we can consider the set of equivalence
classes $\colltf/_{\sim}$. We define the edge set $E(T_{f})$ as
$E(T_{f}):=\colltf/_{\sim}$.

For later use, we define the partial order on the edge set $E(T_{f}))$ as follows: 
$[C_{1}]\leq[C_{2}]$ if and only if for all $A\in[C_{1}]$ and $B\in[C_{2}]$,
$A\subset B$. We say that a tree $T_{f}$ is finite if its edge $E(T_{f})$ is a finite set.


\section{The Partial Order}
\label{app:partial}

As discussed in Section \ref{sec:background}, to see that the partial
order $\preceq$ in Definition \ref{def:partial} is indeed a partial
order, we need to check the reflexivity, the transitivity, and the
antisymmetry. The reflexivity and the transitivity are easier to
check, but to show antisymmetric, we need to show that if two trees
$T_{f}$ and $T_{g}$ satisfies $T_{f}\preceq T_{g}$ and $T_{g}\preceq
T_{f}$, then $T_{f}$ and $T_{g}$ are equivalent in some sense. And we
give the equivalence relation as the topology on the cluster tree
defined in Appendix \ref{app:topology}. The argument is formally
stated in Lemma \ref{lem:partial_equivalence}. The proof is done
later in Appendix \ref{app:proof_topology_partial}.

\begin{lemma}
	\label{lem:partial_equivalence}
	Let $f,g:\mathcal{X}\rightarrow[0,\infty)$ be functions, and $T_{f},T_{g}$ be their cluster trees  in Definition \ref{def:tree}. Then if $f,g$ are continuous and $T_{f},T_{g}$ are finite, $T_{f}\preceq T_{g}$ and $T_{g}\preceq T_{f}$
	implies that there exists a homeomorphism $\Phi:\colltf\rightarrow\colltg$
	that preserves the root, i.e. $\Phi(\mathcal{X})=\mathcal{X}$. Conversely,
	if there exists a homeomorphism $\Phi:\colltf\rightarrow\colltg$
	that preserves the root, $T_{f}\preceq T_{g}$ and $T_{g}\preceq T_{f}$
	hold.
\end{lemma}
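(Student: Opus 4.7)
The plan is to prove both directions of the biconditional separately.

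For the converse direction (root-preserving homeomorphism implies mutual $\preceq$), the key observation is that inclusion in $\colltf$ is topologically characterized once the root $\mathcal{X}$ is fixed. Using Lemma~\ref{lem:treedistance_metric} together with the continuity of $f$ and finiteness of $T_f$, $(\colltf, d_{T_f})$ has the structure of a real tree; in particular, for any $C_1 \in \colltf$ there is a unique arc from $C_1$ to $\mathcal{X}$, and this arc consists precisely of the chain of clusters containing $C_1$. Hence $C_1 \subset C_2$ if and only if $C_2$ lies on this arc, a purely topological condition. A root-preserving homeomorphism $\Phi$ preserves unique arcs and the ``lying between'' relation, hence preserves inclusion. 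So $\Phi$ itself witnesses $T_f \preceq T_g$, and $\Phi^{-1}$ witnesses $T_g \preceq T_f$.

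For the forward direction, suppose $\Phi_1 : \colltf \to \colltg$ and $\Phi_2 : \colltg \to \colltf$ are inclusion-preserving maps. My plan has three steps. First, I would pass to the finite edge posets $E(T_f), E(T_g)$ by showing that $\Phi_1, \Phi_2$ induce combinatorial embeddings between them that respect the tree structure, allowing for edge subdivision (i.e., $\Phi_1$ may carry a single edge of $E(T_f)$ onto a path in $E(T_g)$ passing through branch points, so that $E(T_f)$ appears as a rooted topological minor of $E(T_g)$, and symmetrically for $\Phi_2$). The compatibility of $\Phi_1$ with the branching structure uses that any branching in the image would, when pulled back along $\Phi_2$, manifest in $\colltf$ because $\Phi_2$ preserves $\subset$ both ways. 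Second, I would invoke Lemma~\ref{lem:partial_edgenumber} to obtain $|E(T_f)| = |E(T_g)|$ and then use a mutual-topological-minor argument for finite rooted trees (by induction on the number of essential vertices, using pigeonhole to rule out strict subdivision on either side) to conclude $E(T_f) \cong E(T_g)$ as rooted posets, with roots corresponding as the unique maxima. Third, I would lift this combinatorial isomorphism to a root-preserving homeomorphism on $\colltf, \colltg$: each edge corresponds topologically to an interval of clusters parametrized by level, so I pick an interval-to-interval homeomorphism on each matched pair of edges and glue them compatibly at shared branch points and leaves.

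The main obstacle is the first two steps of the forward direction. A single inclusion-embedding on the infinite poset $\colltf$ need not respect the edge-defining equivalence relation $\sim$, so one cannot naively quotient $\Phi_1$ down to an order-embedding of $E(T_f)$ into $E(T_g)$; one must instead work at the level of rooted topological minors. Ruling out proper subdivision of edges under mutual embedding is the crux, and relies essentially on the finiteness of $E(T_f), E(T_g)$ together with the symmetry of the hypothesis. By contrast, once the combinatorial isomorphism is established, the lifting in the third step is a standard gluing construction for finite one-dimensional tree-like metric spaces and does not require that the edge lengths match.
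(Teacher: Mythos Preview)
Your converse direction is exactly the paper's: the paper simply invokes Lemma~\ref{lem:partial_embedding}, whose proof is the arc-preservation argument you sketch.

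For the forward direction your plan is correct in outline but takes a longer route than the paper. The paper does \emph{not} pass through topological minors or worry about edge subdivision. Instead it applies Lemma~\ref{lem:partial_edgenumber} directly: the quotient map $\bar{\Phi}:E(T_f)\to E(T_g)$, $\bar{\Phi}([C])=[\Phi(C)]$, is injective; by symmetry $|E(T_f)|=|E(T_g)|$, and finiteness makes $\bar{\Phi}$ a bijection outright. The paper then checks, using only this bijectivity together with the order-preserving property of $\Phi$, that $\bar{\Phi}$ sends adjacent edges to adjacent edges and root edge to root edge, and finally lifts this combinatorial isomorphism to a root-preserving homeomorphism via continuity of $f,g$. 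Your concern that ``one cannot naively quotient $\Phi_1$ down to an order-embedding of $E(T_f)$ into $E(T_g)$'' is precisely what Lemma~\ref{lem:partial_edgenumber} is designed to handle (its proof shows $\Phi(C_1)\sim\Phi(C_2)\Rightarrow C_1\sim C_2$, which forces injectivity at the edge level), so the mutual-minor pigeonhole argument you propose is not needed. What you gain from your route is a more careful treatment of the well-definedness of $\bar{\Phi}$, which the paper leaves implicit; what the paper's route buys is a shorter argument that goes straight from edge-count equality to adjacency preservation without any induction on tree structure.
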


The partial order $\preceq$ in Definition \ref{def:partial} gives
a formal definition of simplicity of trees, and it is used to justify pruning
schemes in Section \ref{subsec:pruning}. Hence it is important to
match the partial order $\preceq$ with the intuitive notions of the
complexity of the tree. We provided three arguments in Section
\ref{sec:background}: (1) if $T_{f}\preceq T_{g}$ holds then it must
be the case that $\text{(number of edges of }T_{f})\leq\text{(number
	of edges of }T_{g})$, (2) if $T_{g}$ can be obtained from $T_{f}$ by
adding edges, then $T_{f}\preceq T_{g}$ holds, and (3) the existence
of a topology preserving embedding from $\colltf$ to $\colltg$ implies
the relationship $\treef \preceq \treeg$. We formally state each item
in Lemma \ref{lem:partial_edgenumber}, \ref{lem:partial_insert}, and
\ref{lem:partial_embedding}. Proofs of these lemmas are done later in
Appendix \ref{app:proof_topology_partial}.

\begin{lemma}
	\label{lem:partial_edgenumber}
	Let $f,g:\mathcal{X}\rightarrow[0,\infty)$ be functions, and $T_{f},T_{g}$ be their cluster trees  in Definition \ref{def:tree}. Suppose $T_{f}\preceq T_{g}$
	via $\Phi:\colltf\to\colltg$. Define $\bar{\Phi}:E(T_{f})\to E(T_{g})$
	by for $[C]\in E(T_{f})$ choosing any $C\in[C]$
	and defining as $\bar{\Phi}([C])=[\Phi(C)]$. Then $\bar{\Phi}$
	is injective, and as a consequence, $|E(T_{f})|\leq|E(T_{g})|$.	
\end{lemma}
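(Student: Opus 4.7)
The plan is to prove injectivity of $\bar{\Phi}$ by a direct transport argument through $\Phi$. Suppose $\bar{\Phi}([A])=\bar{\Phi}([B])$, with chosen representatives $a\in[A]$ and $b\in[B]$, so that $\Phi(a)\sim\Phi(b)$ in $T_g$. It would then suffice to show $a\sim b$ in $T_f$, since this forces $[A]=[a]=[b]=[B]$ in $E(T_f)$; the cardinality bound $|E(T_f)|\le|E(T_g)|$ is an immediate consequence of the existence of an injection $E(T_f)\hookrightarrow E(T_g)$.

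To establish $a\sim b$, I would fix an arbitrary $C\in\colltf$ with $C\notin[a,b]\cup[b,a]$ and verify the two biconditionals $C\subset a\Leftrightarrow C\subset b$ and $a\subset C\Leftrightarrow b\subset C$. The key preliminary observation is that $\Phi(C)\notin[\Phi(a),\Phi(b)]\cup[\Phi(b),\Phi(a)]$ in $\colltg$: if instead $\Phi(a)\subset\Phi(C)\subset\Phi(b)$ held, then the ``if'' direction of the biconditional in Definition~\ref{def:partial} would yield $a\subset C\subset b$, contradicting $C\notin[a,b]$, and the other interval is handled symmetrically. With $\Phi(C)$ now known to lie outside those two intervals, the assumed equivalence $\Phi(a)\sim\Phi(b)$ supplies $\Phi(C)\subset\Phi(a)\Leftrightarrow\Phi(C)\subset\Phi(b)$ together with $\Phi(a)\subset\Phi(C)\Leftrightarrow\Phi(b)\subset\Phi(C)$. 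A further use of the biconditional in Definition~\ref{def:partial}, now in both directions (``only if'' to push an inclusion like $C\subset a$ up to $\Phi(C)\subset\Phi(a)$, and ``if'' to pull an inclusion like $\Phi(C)\subset\Phi(b)$ back down to $C\subset b$), translates these into the target biconditionals for $C$ versus $a,b$, yielding $a\sim b$.

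The step I expect to be the main obstacle is the preliminary verification that $\Phi(C)$ avoids the two intervals in $\colltg$, precisely because $\Phi$ is not assumed surjective: $\colltg$ generally contains clusters outside the image $\Phi(\colltf)$, and the equivalence relation in $T_g$ quantifies over all of them. Using the full biconditional nature of the inclusion-preservation, rather than mere order preservation, is what rules out the possibility that $\Phi(C)$ is ``trapped'' inside an interval of $\colltg$ even though $C$ escapes the corresponding interval of $\colltf$. Once this subtle point is settled, the remainder of the argument is a routine double application of the biconditional, and the bound on $|E(T_f)|$ is immediate.
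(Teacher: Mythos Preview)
Your proposal is correct and follows essentially the same route as the paper: both arguments reduce injectivity of $\bar{\Phi}$ to the implication $\Phi(a)\sim\Phi(b)\Rightarrow a\sim b$, establish first that $C\notin[a,b]\cup[b,a]$ forces $\Phi(C)\notin[\Phi(a),\Phi(b)]\cup[\Phi(b),\Phi(a)]$ via the ``if'' direction of the biconditional in Definition~\ref{def:partial}, and then transport the equivalence back using both directions of that biconditional. Your explicit emphasis on why the full biconditional (rather than mere order preservation) is needed for the interval-avoidance step is a point the paper's proof leaves implicit.
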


\begin{lemma}
	\label{lem:partial_insert}
	Let $f,g:\mathcal{X}\rightarrow[0,\infty)$ be functions, and $T_{f},T_{g}$ be their cluster trees  in Definition \ref{def:tree}. If $T_{g}$ can
	be obtained from $T_{f}$ by adding edges, then $T_{f}\preceq T_{g}$
	holds.
\end{lemma}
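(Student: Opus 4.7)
The plan is to construct the order-preserving map $\Phi : \colltf \to \colltg$ required by Definition~\ref{def:partial} using the edge decomposition developed in Appendix~\ref{app:topology}. I would first make precise the informal hypothesis ``$T_g$ is obtained from $T_f$ by adding edges'' as the existence of an injective map $\iota : E(T_f) \to E(T_g)$ that is both order-preserving and order-reflecting for the edge partial order $\leq$, together with an order-preserving injection of each equivalence class $e \in E(T_f)$ into $\iota(e) \subset \colltg$. Since $\preceq$ is transitive, it suffices by induction on the number of added edges to treat the case where $T_g$ differs from $T_f$ by a single additional edge.

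Given this setup, I would build $\Phi$ edgewise. For $C \in \colltf$, let $[C]_f \in E(T_f)$ denote its edge and set $\tilde{e} = \iota([C]_f) \in E(T_g)$. Within any fixed edge the clusters are totally ordered by inclusion (and naturally parameterized by $\lambda_C = \sup\{\lambda : C \in T_f(\lambda)\}$), so I can pick $\Phi(C) \in \tilde{e}$ to be the image of $C$ under an order-isomorphism between these two totally ordered sets.

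To verify $C_1 \subset C_2 \Leftrightarrow \Phi(C_1) \subset \Phi(C_2)$: if $[C_1]_f = [C_2]_f$ the biconditional reduces to the within-edge order, preserved by the edgewise order-isomorphism. Otherwise, applying the definition of the equivalence $\sim$, $C_1 \subset C_2$ is equivalent to $[C_1]_f \leq [C_2]_f$ in $E(T_f)$, which by the order-preservation and order-reflection of $\iota$ is equivalent to $\iota([C_1]_f) \leq \iota([C_2]_f)$ in $E(T_g)$, which in turn is equivalent to $\Phi(C_1) \subset \Phi(C_2)$ by the analogous edgewise reasoning applied in $T_g$.

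The hard part will be making ``adding edges'' precise in a way that meshes with the edge-poset decomposition, because attaching a new branch at an interior point of an existing edge of $T_f$ subdivides that edge in $E(T_g)$, so a single edge of $T_f$ may correspond to a chain of edges of $T_g$ rather than a single edge. I would handle this by first subdividing each edge of $T_f$ at all attachment points of the added branches to obtain a refined tree $T_f^{\mathrm{ref}}$ that is topologically equivalent to $T_f$; by Lemma~\ref{lem:partial_equivalence} this refinement is neutral for $\preceq$, and the refined edge poset then embeds cleanly into $E(T_g)$ in the required order-preserving and order-reflecting way.
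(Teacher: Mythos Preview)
Your proposal is correct in spirit but takes a far more elaborate route than the paper. The paper's own proof is essentially a one-liner: it simply asserts that ``$T_g$ can be obtained from $T_f$ by adding edges'' already means (or immediately yields) the existence of a map $\Phi:\colltf\to\colltg$ with $C_1\subset C_2 \iff \Phi(C_1)\subset\Phi(C_2)$, and then invokes Definition~\ref{def:partial}. In other words, the paper treats the informal hypothesis as virtually synonymous with the conclusion, so the lemma is recorded more as a remark than as something requiring a construction.

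By contrast, you take the hypothesis seriously as something to be formalized: you pin it down via an order-embedding $\iota:E(T_f)\to E(T_g)$ of edge posets together with within-edge order-isomorphisms, reduce by induction to adding a single edge, and then assemble $\Phi$ edgewise, with an intermediate refinement $T_f^{\mathrm{ref}}$ to handle the subdivision caused by attaching a branch at an interior point. This buys you an honest definition of ``adding edges'' and an explicit $\Phi$, at the cost of substantially more machinery than the paper ever deploys. If you want your write-up to match the paper's level of detail, you can collapse everything to the observation that the edge-adding hypothesis, however formalized, directly furnishes the order-preserving and order-reflecting $\Phi$ required by Definition~\ref{def:partial}; your edgewise construction is one concrete way to exhibit it.
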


\begin{lemma}
	\label{lem:partial_embedding}
	Let $f,g:\mathcal{X}\rightarrow[0,\infty)$ be functions, and $T_{f},T_{g}$ be their cluster trees  in Definition \ref{def:tree}. If there exists
	a one-to-one map $\Phi:\colltf\to\colltg$ that is a homeomorphism
	between $\colltf$ and $\Phi(\colltf)$ and preserves the root, i.e. $\Phi(\mathcal{X})=\mathcal{X}$,
	then $T_{f}\preceq T_{g}$ holds.
\end{lemma}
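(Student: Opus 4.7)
The plan is to show that the given homeomorphism $\Phi$ itself witnesses $T_{f}\preceq T_{g}$, i.e.\ that $C_{1}\subset C_{2}$ if and only if $\Phi(C_{1})\subset\Phi(C_{2})$, by translating cluster containment into a topological condition that a root-preserving homeomorphism automatically respects.

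First I would characterize containment in $\colltf$ topologically via the tree metric $d_{T_{f}}$ of Definition~\ref{def:treedistance}. For distinct clusters $C,D\in\colltf$ I claim that $C\subsetneq D$ if and only if $D$ lies on every continuous path in $(\colltf,d_{T_{f}})$ from $C$ to the root $\mathcal{X}$; equivalently, $D$ is a cut point separating $C$ from $\mathcal{X}$. The ``upward'' ancestor chain of $C$, parameterized by level, is an arc from $C$ to $\mathcal{X}$ that passes through $D$ precisely when $C\subsetneq D$. Conversely, if neither $C\subset D$ nor $D\subset C$, one can go up from $C$ to the merge cluster realizing $m_{f}(C,D)$ and then continue along that cluster's ancestor chain to $\mathcal{X}$, producing a path avoiding $D$. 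This step relies on the tree structure of $\colltf$: any two points are joined by a unique arc and any connected subset containing two points must contain the arc between them.

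Next I would transfer this characterization through $\Phi$. Because $\Phi:\colltf\to\Phi(\colltf)$ is a homeomorphism with $\Phi(\mathcal{X})=\mathcal{X}$, continuous paths in $\colltf$ from $C$ to $\mathcal{X}$ correspond bijectively to continuous paths in $\Phi(\colltf)$ from $\Phi(C)$ to $\mathcal{X}$, and $D$ lies on the original if and only if $\Phi(D)$ lies on its image. To close the argument I need that paths in $\colltg$ from $\Phi(C)$ to $\mathcal{X}$ can be analyzed inside $\Phi(\colltf)$ rather than all of $\colltg$. This follows because $\Phi(\colltf)$ is connected in $\colltg$ (continuous image of a connected space) and, by the tree-like property of $\colltg$, any connected subset containing two points contains the unique arc between them; hence the $\colltg$-arc from $\Phi(C)$ to $\mathcal{X}$ lives inside $\Phi(\colltf)$. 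Therefore $\Phi(D)$ is a cut point separating $\Phi(C)$ from $\mathcal{X}$ in $\colltg$ if and only if it is one in $\Phi(\colltf)$, which happens exactly when $D$ is a cut point separating $C$ from $\mathcal{X}$ in $\colltf$.

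Combining these two steps yields $C_{1}\subset C_{2}\Longleftrightarrow\Phi(C_{1})\subset\Phi(C_{2})$, which is precisely the defining property of $T_{f}\preceq T_{g}$ in Definition~\ref{def:partial}. The main obstacle I anticipate is rigorously establishing the dendrite (or $\mathbb{R}$-tree) structure of $(\colltf,d_{T_{f}})$ and $(\colltg,d_{T_{g}})$, specifically uniqueness of arcs between two points and the fact that every connected subset containing the two endpoints contains the unique arc. This likely requires verifying a four-point condition on $d_{T_{f}}$, or unpacking the explicit formula $d_{T_{f}}(A,B)=\lambda_{A}+\lambda_{B}-2m_{f}(A,B)$ together with the ultrametric-like behavior of the merge height function, and the bulk of the technical work would live there.
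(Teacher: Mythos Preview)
Your approach is essentially the same as the paper's: both hinge on the fact that in the tree topology the ancestor chain from a cluster to the root is the unique arc between them, and a root-preserving homeomorphism must carry such arcs to arcs. The paper phrases this a bit more directly by working with the order-interval $[C,\mathcal{X}]=\{D:C\subset D\subset\mathcal{X}\}$, observing it is (topologically) an interval, and using that its image $\Phi([C,\mathcal{X}])$ must equal $[\Phi(C),\mathcal{X}]$ by uniqueness of arcs in $\colltg$; your cut-point characterization is an equivalent reformulation of the same step.
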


\section{Hadamard Differentiability}
\label{app:hadamard}
\begin{definition} 
	[see page 281 of \cite{wellner2013weak}]
	Let $\mathbb{D}$ and $\mathbb{E}$ be normed spaces and let
	$\phi: \mathbb{D}_\phi \to \mathbb{E}$ be a map
	defined on a subset $\mathbb{D}_\phi\subset \mathbb{D}$.
	Then $\phi$ is Hadamard differentiable at $\theta$ if there exists a continuous, linear map
	$\phi'_\theta: \mathbb{D}\to\mathbb{E}$ such that
	$$
	\left\| \frac{\phi(\theta + t q_t)-\phi(\theta)}{t} - \phi_\theta'(h)\right\|_{\mathbb{E}}\to 0
	$$
	as $t\to 0$, for every $q_t \to q$.
\end{definition}

Hadamard differentiability is a key property for bootstrap inference since
it is a sufficient condition for the delta method; for more details,
see section 3.1 of \cite{wellner2013weak}. 
Recall that $\dmm$ is based on the
function $d_{T_{p}}(x,y) = p(x)+p(y) -2 m_p(x,y).$
The following theorem shows that the function $d_{T_{p}}$ is not Hadamard differentiable for some pairs
$(x, y)$.
In our case $\mathbb{D}$ is the set of continuous functions on the sample space,
$\mathbb{E}$ is the real line,
$\theta = p$,
$\phi(p)$ is 
$d_{T_{p}}(x,y)$
and the norm on $\mathbb{E}$ is the usual Euclidean norm.

\begin{theorem}
	\label{thm:hada}
	Let $B(x)$ be the smallest set $B\in T_p$ such that $x\in B$.
	$d_{T_{p}}(x,y)$ is not Hadamard differentiable for $x\neq y$ when
	one of the following two scenarios occurs:
	\begin{itemize}
		\item[(i)] $\min\{p(x),p(y)\}=p(c)$ for some critical point $c$.
		\item[(ii)] $B(x)=B(y)$ and $p(x)=p(y)$.
	\end{itemize}
\end{theorem}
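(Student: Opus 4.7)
The plan is to establish non-differentiability in each scenario by exhibiting a fixed perturbation $q$ (taking the constant sequence $q_t \equiv q$) for which the candidate derivative $\lim_{t\to 0^+}[d_{T_{p+tq}}(x,y)-d_{T_p}(x,y)]/t$ exists but is a nonlinear function of $q$. Since $d_{T_p}(x,y) = p(x)+p(y)-2m_p(x,y)$ and the evaluation functionals $p\mapsto p(x),\,p(y)$ are already continuous and linear, any failure of linearity must come from the merge-height term $m_p(x,y)$. The overall strategy is to identify configurations where the mechanism by which $x$ and $y$ merge is borderline, so that arbitrarily small perturbations tip it into two qualitatively different regimes, producing a limit of min- or absolute-value type.

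For scenario (ii), the first step is to observe that $B(x)=B(y)$ forces $p(x)=p(y)$ and $m_p(x,y)=p(x)$, so $d_{T_p}(x,y)=0$. Next I would verify that for sufficiently small $t$, the points $x$ and $y$ remain in a common connected component of the upper-level set of $p+tq$ at the lower of the two perturbed values (the condition is open since $x,y$ lie strictly inside the same leaf, away from the relevant critical structure). Consequently $m_{p+tq}(x,y) = \min\{(p+tq)(x),(p+tq)(y)\}$, and a direct calculation gives
\begin{align*}
\frac{d_{T_{p+tq}}(x,y)-d_{T_p}(x,y)}{t} \;=\; |q(x)-q(y)|,
\end{align*}
which is positively homogeneous but not linear in $q$ (it is invariant under $q\mapsto -q$), ruling out the existence of a continuous linear derivative.

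For scenario (i), I would assume without loss of generality that $p(y)\leq p(x)$, so that $p(y)=p(c)$ for some critical point $c$; in the Morse-regular setting we can take $c$ to be the saddle at which the clusters containing $x$ and $y$ merge, making $m_p(x,y)=p(c)=p(y)$. By Morse-theoretic stability, a small perturbation $p+tq$ admits a nearby saddle $c_t$ with $(p+tq)(c_t)=p(c)+tq(c)+o(t)$, while $(p+tq)(y)=p(y)+tq(y)$. The degeneracy $p(y)=p(c)$ means that the sign of $q(y)-q(c)$ determines whether $y$ sits just above or just below the perturbed saddle, so the perturbed merge height equals $\min\{(p+tq)(y),(p+tq)(c_t)\}+o(t)$. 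This yields
\begin{align*}
\lim_{t\to 0^+}\frac{d_{T_{p+tq}}(x,y)-d_{T_p}(x,y)}{t} \;=\; q(x)+q(y)-2\min\{q(y),q(c)\},
\end{align*}
which has a kink at $q(y)=q(c)$ and is therefore not linear in $q$. The main obstacle is precisely the justification that the perturbed merge height takes the form $\min\{(p+tq)(y),(p+tq)(c_t)\}$ up to $o(t)$ near the degenerate configuration $p(y)=p(c)$; this requires a careful Morse-theoretic argument for the persistence and continuous motion of the saddle $c$ under small $C^0$ perturbations, together with an analysis of how connected components of the superlevel sets of $p+tq$ rearrange near the critical value, in the same spirit as the argument underlying Lemma~\ref{lem:morse}.
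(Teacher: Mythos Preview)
Your argument is correct and rests on the same geometric picture as the paper's, but the logical route is different. The paper fixes a single bump perturbation (centered at $c$ in case (i), at $x$ in case (ii)) and lets the scalar $t=\epsilon_n$ approach $0$ through an alternating-sign sequence; it then checks that the difference quotient oscillates (between $0$ and $\pm 2$ in case (i), between $+1$ and $-1$ in case (ii)), so no limit exists at all. You instead compute the one-sided directional derivative for a \emph{general} direction $q$, obtaining $|q(x)-q(y)|$ in case (ii) and $q(x)+q(y)-2\min\{q(y),q(c)\}$ in case (i), and conclude by noting that neither expression is linear in $q$.

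Both arguments hinge on the same degeneracy (the merge height is governed by a $\min$ of two quantities that coincide at $p$), and both need the same Morse stability input to track the saddle $c\mapsto c_t$ under perturbation. The paper's version is more concrete and avoids discussing the full semi-derivative; your version is more informative, since the explicit formulas for the directional derivative reveal the exact ``kink'' structure and recover the paper's oscillation as the special case $q=$ bump (where your formula gives different values along $q$ and $-q$). The obstacle you flag in case (i)---justifying $m_{p+tq}(x,y)=\min\{(p+tq)(y),(p+tq)(c_t)\}+o(t)$---is also present, and equally glossed over, in the paper's proof.
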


The merge distortion metric $\dm$ is also not Hadamard differentiable.

\section{Confidence Sets Constructions}
\label{app:confidence_set}

\subsection{Regularity conditions on the kernel}
\label{app:regularity}
To apply the results in \cite{cck} which imply
that the bootstrap confidence set is consistent, we consider the following two assumptions.

\begin{description}
	\item[\textbf{(K1)}] The kernel function $K$ has the bounded second derivative and is symmetric, non-negative, and 
	$$\int x^2K(x)dx<\infty,\qquad \int K(x)^2dx<\infty.
	$$ 
	\item[\textbf{(K2)}] The kernel function $K$ satisfies
	\begin{equation}
	\begin{aligned}
	\mathcal{K} = \left\{y\mapsto K\left(\frac{x-y}{h}\right): x\in\mathbb{R}^d, h>0\right\}.
	\end{aligned}
	\end{equation}
	We require that $\mathcal{K}$ satisfies
	\begin{align}
	\underset{P}{\sup} N\left(\mathcal{K}, L_2(P), \epsilon\|F\|_{L_2(P)}\right)\leq \left(\frac{A}{\epsilon}\right)^v
	\label{eq:VC}
	\end{align}
	for some positive numbers $A$ and $v$,
	where $N(T,d,\epsilon)$ denotes the $\epsilon$-covering number of the metric space $(T,d)$, 
	$F$ is the envelope function of $\mathcal{K}$, 
	and the supremum is taken over the whole $\mathbb{R}^d$.
	The $A$ and $v$ are usually called the VC characteristics of $\mathcal{K}$.
	The norm $\|F\|^2_{L_2(P)} =\int|F(x)|^2dP(x)$.
\end{description}

Assumption (K1) is to ensure that the variance of
the KDE is bounded and $p_h$ has the bounded second derivative.
This assumption is very common in statistical literature,
see e.g. \cite{wasserman2006all,scott2015multivariate}. 
Assumption (K2) is to regularize the complexity of the kernel function
so that the supremum norm for kernel functions and their derivatives 
can be bounded in probability.
A similar assumption appears in \cite{Einmahl2005} and \cite{genovese2014nonparametric}.
The Gaussian kernel and most compactly supported kernels satisfy both assumptions.

\subsection{Pruning}
\label{app:pruning}

The goal of this section is to formally define the pruning scheme in
Section \ref{subsec:pruning}. Note that when pruning leaves and
internal branches, when the cumulative length is computed for each
leaf and internal branch, then the pruning process can be done at
once. We provide two pruning schemes in Section
\ref{subsec:pruning} in a unifying framework by defining an appropriate
notion of lifetime for each edge, and deleting all insignificant edges with
small lifetimes. To follow the pruning schemes in Section
\ref{subsec:pruning}, we require that the lifetime of a child edge is
shorter than the lifetime of a parent edge, so that we can delete edges from
the top. We evaluate the lifetime of each edge by an appropriate
nonnegative (possibly infinite) function $\life$. We formally define
the pruned tree $Pruned_{\life,\hat{t}_{\alpha}}(\hat{T}_{h})$ as
follows:

\begin{definition}
	Suppose the function $\life :E(\hat{T}_{h})\to [0,+\infty]$ satisfies that $[C_{1}]\leq[C_{2}]\Longrightarrow\life([C_{1}])\subset\life([C_{2}])$.
	We define the pruned tree $Pruned_{\life,\hat{t}_{\alpha}}(\hat{T}_{h}):\,\mathbb{R}\to2^{\mathcal{X}}$
	as 
	\[
	Pruned_{\life,\hat{t}_{\alpha}}(\hat{T}_{h})(\lambda)=\left\{ C\in\hat{T}_{h}(\lambda-\hat{t}_{\alpha}):\,\life([C])>\hat{t}_{\alpha}\right\} .
	\]
\end{definition}

We suggest two $\life$ functions corresponding to two pruning schemes in Section \ref{subsec:pruning}. We first need several definitions. For any
$[C]\in E(\hat{T}_{h})$, define its level as 
\[
level([C]):=\left\{ \lambda:\,\text{there exists }A\in[C]\cap\hat{T}_{h}(\lambda)\right\} ,
\]
and define its cumulative level as 
\[
cumlevel([C]):=\left\{ \lambda:\,\text{there exists }A\in\hat{T}_{h}(\lambda),\,B\in[C]\text{ such that }A\subset B\right\} .
\]
Then $\life^{leaf}$
corresponds to first pruning scheme in Section \ref{subsec:pruning}, which is to prune out only insignificant leaves. 
\[
\life^{leaf}([C])=\begin{cases}
\sup\{level([C])\}-\inf\{level([C])\} & \text{ if }\inf\{level([C])\}\neq\inf\left\{ cumlevel([C])\right\} \\
+\infty & \text{ otherwise}.
\end{cases}.
\]
And $\life^{top}$ corresponds to second pruning scheme in Section \ref{subsec:pruning}, which is to prune out insignificant edges from the top.
\[
\life^{top}([C])=\sup\{cumlevel([C])\}-\inf\left\{ cumlevel([C])\right\} .
\]

Note that $\life^{leaf}$ is lower bounded by $\life^{top}$. In fact, for any $\life$ function that is lower bounded by $\life^{top}$, the pruned tree $Pruned_{\life,\hat{t}_{\alpha}}$ is a valid tree in the confidence set $\hat{C}_{\alpha}$ that is simpler than the original estimate $\hat{T}_{h}$, so that the pruned tree is the desired tree as discussed in Section \ref{subsec:pruning}. We formally state as follows. The proof is given in Appendix \ref{app:proof_confidence_set}
\begin{lemma}
	\label{lem:pruning}
	Suppose that the $\life$ function satisfies: for all $[C]\in E(\hat{T}_{h})$,
	$\life^{top}([C])\leq\life([C])$. Then
	\begin{itemize}
		\item[(i)] $Pruned_{\life,\hat{t}_{\alpha}}(\hat{T}_{h}) \preceq \esttree$.
		\item[(ii)]	there exists a function $\tilde{p}$ such that $T_{\tilde{p}}=Pruned_{\life,\hat{t}_{\alpha}}(\hat{T}_{h})$.
		\item[(iii)] $\tilde{p}$ in (ii) satisfies $\tilde{p} \in \hat C_\alpha$.
	\end{itemize}	
\end{lemma}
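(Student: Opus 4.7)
The plan is to construct $\tilde{p}$ explicitly from $\hat{p}_h$ and the surviving-edge structure, then verify each of the three claims directly. For each $x \in \mathcal{X}$, define
\[
\mu^{*}(x) = \sup\{\mu \in \mathbb{R} : \exists\, C \in \esttree(\mu) \text{ with } x \in C \text{ and } \life([C]) > \hat{t}_\alpha\},
\]
the largest level at which $x$ belongs to a surviving cluster of the empirical tree, and set $\tilde{p}(x) := \mu^{*}(x) + \hat{t}_\alpha$. The root edge $[\mathcal{X}]$ has $\life^{top}$ equal to the full range of $\hat{p}_h$, so the root always survives (assuming $\hat{t}_\alpha$ does not exceed that range) and $\mu^{*}$ is finite-valued on all of $\mathcal{X}$.

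For part (ii), I would show $T_{\tilde{p}} = Pruned_{\life,\hat{t}_\alpha}(\esttree)$ by directly computing the upper level set: $\{x : \tilde{p}(x) \geq \lambda\} = \{x : \mu^{*}(x) \geq \lambda - \hat{t}_\alpha\}$ equals the union of the surviving clusters of $\esttree(\lambda - \hat{t}_\alpha)$. These surviving clusters are mutually disjoint and separated in $\mathcal{X}$, with each individually connected (being connected components of an upper level set of $\hat{p}_h$), so they coincide with the connected components of the upper level set, yielding $T_{\tilde{p}}(\lambda) = Pruned_{\life,\hat{t}_\alpha}(\esttree)(\lambda)$. Part (i) follows immediately by defining $\Phi : Pruned_{\life,\hat{t}_\alpha}(\esttree) \to \esttree$ to send each surviving cluster (as a subset of $\mathcal{X}$) to itself regarded as an element of $\esttree$; since $\Phi$ is the identity on underlying sets, $C_1 \subset C_2 \iff \Phi(C_1) \subset \Phi(C_2)$ is automatic. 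For part (iii), since $d_\infty(T_{\tilde{p}}, \esttree) = \|\tilde{p} - \hat{p}_h\|_\infty$ by definition, it suffices to bound the difference pointwise. Fix $x$ and let $[C_x]$ be the edge of $x$'s smallest containing cluster in $\esttree$. If $[C_x]$ survives, then $\mu^{*}(x) = \hat{p}_h(x)$, so $\tilde{p}(x) - \hat{p}_h(x) = \hat{t}_\alpha$. Otherwise, monotonicity of $\life$ produces a highest pruned ancestor $[C^{*}]$ of $[C_x]$ (whose parent, if any, survives); then $\mu^{*}(x) = \inf\{cumlevel([C^{*}])\}$ and $\hat{p}_h(x) \leq \sup\{cumlevel([C^{*}])\}$. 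The hypothesis $\life^{top}([C^{*}]) \leq \life([C^{*}]) \leq \hat{t}_\alpha$ gives $\hat{p}_h(x) - \mu^{*}(x) \leq \hat{t}_\alpha$, so $\tilde{p}(x) - \hat{p}_h(x) \in [0, \hat{t}_\alpha]$. Either way $|\tilde{p}(x) - \hat{p}_h(x)| \leq \hat{t}_\alpha$, hence $\tilde{p} \in \hat{C}_\alpha$.

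The main obstacle is the careful verification of step (ii): one must rigorously confirm the set-theoretic equality $\{x : \tilde{p}(x) \geq \lambda\} = \bigcup Pruned_{\life,\hat{t}_\alpha}(\esttree)(\lambda)$ together with the correct identification of connected components. This uses monotonicity of $\life$ in an essential way, ensuring that whenever a cluster survives, every ancestor of it survives too, so that the family of surviving clusters is coherently nested and the shifted level sets of $\tilde{p}$ reproduce the pruned tree at every $\lambda$. A related subtlety is the boundary behavior at exactly the split level $\inf\{cumlevel([C^{*}])\}$, where the relevant cluster can be regarded as either the bottom of $[C^{*}]$ or the top of its parent edge, so one needs to verify the level set attains these boundary points correctly.
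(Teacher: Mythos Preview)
Your proposal is correct and follows essentially the same route as the paper: you construct the same explicit $\tilde{p}$ (the paper's proof writes the survival threshold as $2\hat{t}_\alpha$ rather than the $\hat{t}_\alpha$ appearing in the formal definition you are using, an internal inconsistency in the paper), you use the same cumulative-level bound via $\life^{top}$ for (iii), and for (i) the paper simply cites its ``adding edges'' Lemma, which is exactly your identity-map observation. If anything, you supply more justification for (ii) than the paper does, which merely states the formula for $\tilde{p}$ without verifying that its level sets reproduce the pruned tree.
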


{\bf Remark:}
It can be shown that complete pruning --- simultaneously removing all leaves and branches with length less than $2\hat t_\alpha$ ---
can in general yield a tree that is outside the confidence set.
For example, see Figure~\ref{fig:prune}.
If we do complete pruning to this tree, we will get the trivial tree.

\section{Proofs for Appendix \ref{app:topology} and \ref{app:partial}}
\label{app:proof_topology_partial}
\subsection{Proof of Lemma \ref{lem:treedistance_metric}}

\textbf{Lemma \ref{lem:treedistance_metric}.}\textit{
	Let $f:\mathcal{X}\to[0,\infty)$ be a function, $T_{f}$ be its cluster tree in Definition \ref{def:tree}, and $d_{T_{f}}$ be its tree distance function in Definition \ref{def:treedistance}. Then $d_{T_{f}}$ on $\mathcal{X}$ is a pseudometric and $d_{T_{f}}$ on $\colltf$ is a metric.}

\begin{proof} 
	
	First, we show that $d_{T_{f}}$ on $\mathcal{X}$ is a pseudometric. To do this, we need to show non-negativity($d_{T_{f}}(x,y)\geq 0$), $x=y$ implying $d_{T_{f}}(x,y) = 0$, symmetry($d_{T_{f}}(x,y) = d_{T_{f}}(y,x)$), and subadditivity($d_{T_{f}}(x,y)+d_{T_{f}}(y,z) \leq d_{T_{f}}(x,z)$).
	
	For non-negativity, note that for all $x,y\in\mathcal{X}$, $m_{f}(x,y)\leq\min\left\{ f(x),f(y)\right\} $, so 
	\begin{equation}
	\label{eq:proof_pseudometric_nonnegativity}
	d_{T_{f}}(x,y)=f(x)+f(y)-2m_{f}(x,y)\geq0.
	\end{equation}
	For $x=y$ implying $d_{T_{f}}(x,y) = 0$, $x=y$ implies $m_{f}(x,y)=f(x)=f(y)$, so 
	\begin{equation}
	\label{eq:proof_pseudometric_identity}
	x=y\Longrightarrow d_{T_{f}}(x,y)=0.
	\end{equation}
	For symmetry, since $m_{f}(x,y)=m_{f}(y,x)$, 
	\begin{equation}
	\label{eq:proof_pseudometric_symmetry}
	d_{T_{f}}(x,y)=d_{T_{f}}(y,x).
	\end{equation}
	For subadditivity, note first that $m_{f}(x,y)\leq f(y)$ and $m_{f}(y,z)\leq f(y)$ holds, so 
	\begin{equation}
	\label{eq:proof_pseudometric_maxbound}
	\max\left\{ m_{f}(x,y),\,m_{f}(y,z)\right\} \leq f(y).
	\end{equation}
	And also note that there exists $C_{xy},C_{yz}\in T_{f}\left(\min\left\{ m_{f}(x,y),\,m_{f}(y,z)\right\} \right)$
	that satisfies $x,y\in C_{xy}$ and $y,z\subset C_{yz}$. Then $y\in C_{xy}\cap C_{yz}\neq\emptyset$,
	so $x,z\in C_{xy}=C_{yz}$. Then from definition of $m_{f}(x,z)$, this implies that 
	\begin{equation}
	\label{eq:proof_pseudometric_minbound}
	\min\left\{ m_{f}(x,y),\,m_{f}(y,z)\right\} \leq m_{f}(x,z).
	\end{equation}
	And by applying \eqref{eq:proof_pseudometric_maxbound} and \eqref{eq:proof_pseudometric_minbound}, $d_{T_{f}}(x,y)+d_{T_{f}}(y,z)$ is upper bounded by $d_{T_{f}}(x,z)$ as
	\begin{align}
	\label{eq:proof_pseudometric_subadditivity}
	& d_{T_{f}}(x,y)+d_{T_{f}}(y,z)\nonumber\\
	& =f(x)+f(y)-2m_{f}(x,y)+f(y)+f(z)-2m_{f}(y,z)\nonumber\\
	& =f(x)+f(z)-2\left(\min\left\{ m_{f}(x,y),\,m_{f}(y,z)\right\} +\max\left\{ m_{f}(x,y),\,m_{f}(y,z)\right\} -f(y)\right)\nonumber\\
	& \geq f(x)+f(z)-2m_{f}(x,z)\nonumber\\
	& =d_{T_{f}}(x,z).
	\end{align}
	Hence \eqref{eq:proof_pseudometric_nonnegativity},
	\eqref{eq:proof_pseudometric_identity},
	\eqref{eq:proof_pseudometric_symmetry}, and
	\eqref{eq:proof_pseudometric_subadditivity} implies that $d_{T_{f}}$
	on $\mathcal{X}$ is a pseudometric.
	
	Second, we show that $d_{T_{f}}$ on $\colltf$ is a metric. To do this,
	we need to show non-negativity($d_{T_{f}}(x,y)\geq 0$), identity of
	indiscernibles($x=y\iff d_{T_{f}}(x,y) = 0$), symmetry($d_{T_{f}}(x,y)
	= d_{T_{f}}(y,x)$), and subadditivity($d_{T_{f}}(x,y)+d_{T_{f}}(y,z)
	\leq d_{T_{f}}(x,z)$).
	
	For nonnegativity, note that if $C_{1}\in T_{f}(\lambda_{1})$ and
	$C_{2}\in T_{f}(\lambda_{2})$, then
	$m_{f}(C_{1},C_{2})\leq\min\{\lambda_{1},\lambda_{2}\}$, so
	\begin{equation}
	\label{eq:proof_metric_nonnegativity}
	d_{T_{f}}(C_{1},C_{2})=\lambda_{1}+\lambda_{2}-2m_{f}(C_{1},C_{2})\geq0.
	\end{equation}
	For identity of indiscernibles, $C_{1}=C_{2}$ implies
	$m_{f}(C_{1},C_{2})=\lambda_{1}=\lambda_{2}$, so
	\begin{equation}
	\label{eq:proof_metric_identity1}
	C_{1}=C_{2}\Longrightarrow d_{T_{f}}(C_{1},C_{2})=0.
	\end{equation}
	And conversely, $d_{T_{f}}(C_{1},C_{2})=0$ implies
	$\lambda_{1}=\lambda_{2}=m_{f}(C_{1},C_{2})$, so there exists $C\in
	T_{f}(\lambda_{1})$ such that $C_{1}\subset C$ and $C_{2}\subset
	C$. Then since $C_{1},C_{2},C\in T_{f}(\lambda_{1})$, so $C_{1}\cap
	C\neq\emptyset$ implies $C_{1}=C$ and similarly $C_{2}=C$, so
	\begin{equation}
	\label{eq:proof_metric_identity2}
	d_{T_{f}}(C_{1},C_{2})=0\Longrightarrow C_{1}=C_{2}.
	\end{equation}
	Hence \eqref{eq:proof_metric_identity1} and \eqref{eq:proof_metric_identity2} implies identity of indiscernibles as
	\begin{equation}
	\label{eq:proof_metric_identity}
	C_{1}=C_{2}\iff d_{T_{f}}(C_{1},C_{2})=0.
	\end{equation}
	For symmetry, since $m_{f}(C_{1},C_{2})=m_{f}(C_{2},C_{1})$, 
	\begin{equation}
	\label{eq:proof_metric_symmetry}
	d_{T_{f}}(C_{1},C_{2})=d_{T_{f}}(C_{2},C_{1}).
	\end{equation}
	For subadditivity, note that $m_{f}(C_{1},C_{2})\leq\lambda_{2}$ and $m_{f}(C_{2},C_{3})\leq\lambda_{2}$
	holds, so 
	\begin{equation}
	\label{eq:proof_metric_maxbound}
	\max\left\{ m_{f}(C_{1},C_{2}),\,m_{f}(C_{2},C_{3})\right\} \leq\lambda_{2}.
	\end{equation}
	And also note that there 
	exists $C_{12},C_{23}\in T_{f}\left(\min\left\{ m_{f}(C_{1},C_{2}),\,m_{f}(C_{2},C_{3})\right\} \right)$
	that satisfies $C_{1},C_{2}\subset C_{12}$ and $C_{2},C_{3}\subset C_{23}$.
	Then $C_{2}\subset C_{12}\cap C_{23}\neq\emptyset$, so $C_{1},C_{3}\in C_{12}=C_{23}$.
	Then from definition of $m_{f}(C_{1},C_{3})$, this implies that 
	\begin{equation}
	\label{eq:proof_metric_minbound}
	\min\left\{ m_{f}(C_{1},C_{2}),\,m_{f}(C_{2},C_{3})\right\} \leq m_{f}(C_{1},C_{3}).
	\end{equation}
	And by applying \eqref{eq:proof_metric_maxbound} and 
	\eqref{eq:proof_metric_minbound}, $d_{T_{f}}(C_{1},C_{2})+d_{T_{f}}(C_{2},C_{3})$ 
	is upper bounded by $d_{T_{f}}(C_{1},C_{3})$ as
	\begin{align}
	\label{eq:proof_metric_subadditivity}
	& d_{T_{f}}(C_{1},C_{2})+d_{T_{f}}(C_{2},C_{3}) \nonumber\\
	& =\lambda_{1}+\lambda_{2}-2m_{f}(C_{1},C_{2})+\lambda_{2}+\lambda_{3}-2m_{f}(C_{2},C_{3}) \nonumber\\
	& =\lambda_{1}+\lambda_{3}-2\left(\min\left\{ m_{f}(C_{1},C_{2}),\,m_{f}(C_{2},C_{3})\right\} +
	\max\left\{ m_{f}(C_{1},C_{2}),\,m_{f}(C_{2},C_{3})\right\} -\lambda_{2}\right)\nonumber\\
	& \geq\lambda_{1}+\lambda_{3}-2m_{f}(C_{1},C_{3})\nonumber\\
	& =d_{T_{f}}(C_{1},C_{3}).
	\end{align}
	Hence \eqref{eq:proof_metric_nonnegativity},
	\eqref{eq:proof_metric_identity},
	\eqref{eq:proof_metric_symmetry}, and
	\eqref{eq:proof_metric_subadditivity} $d_{T_{f}}$ on $\colltf$ is a
	metric.
	
\end{proof}

\subsection{Proof of Lemma \ref{lem:partial_equivalence}}
\textbf{Lemma \ref{lem:partial_equivalence}.}\textit{
	Let $f,g:\mathcal{X}\rightarrow[0,\infty)$ be functions, and $T_{f},T_{g}$ be their cluster trees  in Definition \ref{def:tree}. Then if $f,g$ are continuous and $T_{f},T_{g}$ are finite, $T_{f}\preceq T_{g}$ and $T_{g}\preceq T_{f}$
	implies that there exists a homeomorphism $\Phi:\colltf\rightarrow\colltg$
	that preserves the root, i.e. $\Phi(\mathcal{X})=\mathcal{X}$. Conversely,
	if there exists a homeomorphism $\Phi:\colltf\rightarrow\colltg$
	that preserves the root, $T_{f}\preceq T_{g}$ and $T_{g}\preceq T_{f}$
	hold.
}

\begin{proof}
	
	First, we show that $T_{f}\preceq T_{g}$ and $T_{g}\preceq T_{f}$
	implies homeomorphism. Let $\Phi:\colltf\to\colltg$ be the map that
	gives the partial order $T_{f}\preceq T_{g}$ in Definition
	\ref{def:partial}.  Then from Lemma \ref{lem:partial_edgenumber},
	$\bar{\Phi}:E(T_{f})\to E(T_{g})$ is injective and
	$|E(T_{f})|\leq|E(T_{g})|$. With a similar argument,
	$|E(T_{g})|\leq|E(T_{f})|$ holds, so
	\[
	|E(T_{f})|=|E(T_{g})|.
	\]
	Since we assumed that $T_{f}$ and $T_{g}$ are finite,
	i.e. $|E(T_{f})|$ and $|E(T_{g})|$ are finite, $\bar{\Phi}$ becomes a
	bijection.
	
	Now, let $[C_{1}]$ and $[C_{2}]$ be adjacent edges in $E(T_{f})$, and
	without loss of generality, assume $C_{1}\subset C_{2}$. We argue
	below that $\bar{\Phi}([C_{1}])$ and $\bar{\Phi}([C_{2}])$ are also
	adjacent edges. Then $\Phi(C_{1})\subset\Phi(C_{2})$ holds from
	Definition \ref{def:partial}, and since $\bar{\Phi}$ is bijective,
	$[\Phi(C_{1})]=\bar{\Phi}([C_{1}])$ and
	$[\Phi(C_{2})]=\bar{\Phi}([C_{2}])$ holds. Suppose there exists
	$\tilde{C}_{3}\in\colltg$ such that
	$[\tilde{C}_{3}]\notin\{\bar{\Phi}([C_{1}]),\,\bar{\Phi}([C_{2}])\}$
	and $\Phi(C_{1})\subset\tilde{C}_{3}\subset\Phi(C_{2})$.  Then since
	$\bar{\Phi}$ is bijective, there exists $C_{3}\in\colltf$ such that
	$[\Phi(C_{3})]=[\tilde{C}_{3}]$. Then
	$\Phi(C_{1})\subset\tilde{C}_{3}\subset\Phi(C_{2})$ implies that
	$C_{1}\subset C_{3}\subset C_{2}$, and $\bar{\Phi}$ being a bijection
	implies that $[C_{3}]\notin\{[C_{1}],\,[C_{3}]\}$.  This is a
	contradiction since $[C_{1}]$ and $[C_{2}]$ are adjacent edges. Hence
	there is no such $\tilde{C}_{3}$, and $\bar{\Phi}([C_{1}])$ and
	$\bar{\Phi}([C_{2}])$ are adjacent edges. Therefore,
	$\bar{\Phi}:E(T_{f})\to E(T_{g})$ is a bijective map that sends
	adjacent edges to adjacent edges, and also sends root edge to root
	edge.
	
	Then combining $\bar{\Phi}:E(T_{f})\to E(T_{g})$ being bijective sending adjacent edges to adjacent edges and root edge to root edge, and $f,g$ being continuous, the map $\bar{\Phi}:E(T_{f})\to E(T_{g})$ can be extended to a homeomorphism $\colltg\to\colltf$
	that preserves the root.
	
	Second, the part that homeomorphism implies $T_{f}\preceq T_{g}$ and
	$T_{g}\preceq T_{f}$ follows by Lemma \ref{lem:partial_embedding}.
\end{proof}

\subsection{Proof of Lemma \ref{lem:partial_edgenumber}}

\textbf{Lemma \ref{lem:partial_edgenumber}.}\textit{
	Let $f,g:\mathcal{X}\rightarrow[0,\infty)$ be functions, and $T_{f},T_{g}$ be their cluster trees  in Definition \ref{def:tree}. Suppose $T_{f}\preceq T_{g}$
	via $\Phi:\colltf\to\colltg$. Define $\bar{\Phi}:E(T_{f})\to E(T_{g})$
	by for $[C]\in E(T_{f})$ choosing any $C\in[C]$
	and defining as $\bar{\Phi}([C])=[\Phi(C)]$. Then $\bar{\Phi}$
	is injective, and as a consequence, $|E(T_{f})|\leq|E(T_{g})|$.	
}

\begin{proof}
	
	We will first show that equivalence relation on $\colltg$ implies
	equivalence relation on $\colltf$, i.e.
	\begin{equation}
	\label{eq:proof_injective_equivalence}
	\Phi(C_{1})\sim\Phi(C_{2})\Longrightarrow C_{1}\sim C_{2}.
	\end{equation}
	Suppose $\Phi(C_{1})\sim\Phi(C_{2})$ in $\colltg$. Then from
	Definition \ref{def:partial} of $\Phi$, for any $C\in\colltf$ such
	that $C\notin[C_{1},C_{2}]\cup[C_{2},C_{1}]$,
	$\Phi(C)\notin[\Phi(C_{1}),\Phi(C_{2})]\cup[\Phi(C_{2}),\Phi(C_{1})]$
	holds. Then from definition of $\Phi(C_{1})\sim\Phi(C_{2})$,
	\[
	\Phi(C)\subset\Phi(C_{1})\text{ iff }\Phi(C)\subset\Phi(C_{2})
	\text{ and }\Phi(C_{1})\subset\Phi(C)\text{ iff }\Phi(C_{2})\subset\Phi(C).
	\]
	Then again from Definition \ref{def:partial} of $\Phi$, equivalence
	relation holds for $C_{1}$ and $C_{2}$ holds as well, i.e.
	\[
	C\subset C_{1}\text{ iff }C\subset C_{2}\text{ and }C_{1}\subset C\text{ iff }C_{2}\subset C.
	\]
	Hence \eqref{eq:proof_injective_equivalence} is shown, and this implies that 
	\begin{align*}
	\bar{\Phi}([C_{1}])=\bar{\Phi}([C_{2}]) & \Longrightarrow[\Phi(C_{1})]=[\Phi(C_{2})]\\
	& \Longrightarrow\Phi(C_{1})\sim\Phi(C_{2})\\
	& \Longrightarrow C_{1}\sim C_{2}\\
	& \Longrightarrow[C_{1}]=[C_{2}],
	\end{align*}
	so $\bar{\Phi}$ is injective.
\end{proof}

\subsection{Proof of Lemma \ref{lem:partial_insert}}

\textbf{Lemma \ref{lem:partial_insert}.}\textit{
	Let $f,g:\mathcal{X}\rightarrow[0,\infty)$ be functions, and $T_{f},T_{g}$ be their cluster trees  in Definition \ref{def:tree}. If $T_{g}$ can
	be obtained from $T_{f}$ by adding edges, then $T_{f}\preceq T_{g}$
	holds.
}

\begin{proof}
	Since $T_{g}$ can be obtained from $T_{f}$ by adding edges, there
	is a map $\Phi:T_{f}\to T_{g}$ which preserves order, i.e. $C_{1}\subset C_{2}$
	if and only if $\Phi(C_{1})\subset\Phi(C_{2})$. Hence $T_{f}\preceq T_{g}$
	holds.
\end{proof}

\subsection{Proof of Lemma \ref{lem:partial_embedding}}

\textbf{Lemma \ref{lem:partial_embedding}.}\textit{
	Let $f,g:\mathcal{X}\rightarrow[0,\infty)$ be functions, and $T_{f},T_{g}$ be their cluster trees  in Definition \ref{def:tree}. If there exists
	a one-to-one map $\Phi:\colltf\to\colltg$ that is a homeomorphism
	between $\colltf$ and $\Phi(\colltf)$ and preserves root, i.e. $\Phi(\mathcal{X})=\mathcal{X}$,
	then $T_{f}\preceq T_{g}$ holds.
}

\begin{proof}
	For any $C\in\colltf$, note that $[C,\mathcal{X}]\subset\colltf$
	is homeomorphic to an interval, hence $\Phi([C,\mathcal{X}])\subset\colltg$
	is also homeomorphic to an interval. Since $\colltg$ is topologically
	a tree, an interval in a tree with fixed boundary points is uniquely
	determined, i.e. 
	\begin{equation}
	\label{eq:proof_homeopartial_intervalimage}
	\Phi([C,\mathcal{X}])=[\Phi(C),\Phi(\mathcal{X})]=[\Phi(C),\mathcal{X}].
	\end{equation}
	For showing $T_{f}\preceq T_{g}$, we need to argue that for all $C_{1},C_{2}\in\colltf$, $C_{1}\subset C_{2}$ holds if and only if $\Phi(C_{1})\subset \Phi(C_{2})$. For only if direction, suppose $C_{1}\subset C_{2}$. Then $C_{2}\in[C_{1},\mathcal{X}]$,
	so Definition \ref{def:partial} and \eqref{eq:proof_homeopartial_intervalimage} implies
	\[
	\Phi(C_{2})\subset\Phi([C_{1},\mathcal{X}])=[\Phi(C_{1}),\mathcal{X}].
	\]
	And this implies
	\begin{equation}
	\label{eq:proof_homeopartial_onlyif}
	\Phi(C_{1})\subset\Phi(C_{2}).
	\end{equation}
	For if direction, suppose $\Phi(C_{1})\subset\Phi(C_{2})$. Then since $\Phi^{-1}:\Phi(\colltf)\to\colltf$
	is also an homeomorphism with $\Phi^{-1}(\mathcal{X})=\mathcal{X}$,
	hence by repeating above argument, we have 
	\begin{equation}
	\label{eq:proof_homeopartial_if}
	C_{1}=\Phi^{-1}(\Phi(C_{1}))\subset\Phi^{-1}(\Phi(C_{2}))=C_{2}.
	\end{equation}
	Hence \eqref{eq:proof_homeopartial_onlyif} and \eqref{eq:proof_homeopartial_if} implies $\treef \preceq \treeg$.
\end{proof}

\section{Proofs for Section \ref{sec:metric} and Appendix \ref{app:hadamard}}
\label{app:proof_metric_hadamard}

\subsection{Proof of Lemma \ref{lem:infty_M} and extreme cases}
\textbf{Lemma \ref{lem:infty_M}.} \textit{
	For any densities $p$ and $q$, the following relationships hold:
	\begin{enumerate}
		\item[(i)] When $p$ and $q$ are continuous, then $d_\infty(T_p,T_q) = \dm(T_p,T_q)$.
		\item[(ii)] $\dmm(T_{p},T_{q})\leq4d_{\infty}(T_{p},T_{q}).$
		\item[(iii)] $\dmm(T_{p},T_{q})\geq d_{\infty}(T_{p},T_{q})-a$, where $a$ is defined as above. Additionally when  
		$\mu(\mathcal{X}) = \infty$, then $\dmm(T_{p},T_{q})\geq d_{\infty}(T_{p},T_{q})$.
	\end{enumerate}
}

\begin{proof}

	(i)
	
	First, we show $d_{M}(T_{p},\,T_{q})\leq d_{\infty}(T_{p},\,T_{q})$.
	Note that this part is implicitly shown in \citet[Proof of Theorem 6]{eldridge2015beyond}.
	For all $\epsilon>0$ and for any $x,y\in\mathcal{X}$, let $C_{0}\in\treep(m_{p}(x,y)-\epsilon)$
	with $x,y\in C_{0}$. Then for all $z\in C_{0}$, $q(z)$ is lower
	bounded as 
	\begin{align*}
	q(z) & >p(z)-d_{\infty}(T_{p},T_{q})\\
	& \geq m_{p}(x,y)-\epsilon-d_{\infty}(T_{p},T_{q}),
	\end{align*}
	so $C_{0}\subset q^{-1}\left(m_{p}(x,y)-\epsilon-d_{\infty}(T_{p},T_{q}),\,\infty\right)$
	and $C_{0}$ is connected, so $x$ and $y$ are in the same connected
	component of $q^{-1}\left(m_{p}(x,y)-\epsilon-d_{\infty}(T_{p},T_{q}),\,\infty\right)$,
	which implies 
	\begin{equation}
	m_{q}(x,y)\leq m_{p}(x,y)-\epsilon-d_{\infty}(T_{p},T_{q}).
	\label{eq:proof_equivalence_dinftydM_mergebound1}
	\end{equation}
	A similar argument holds for other direction as 
	\begin{equation}
	m_{p}(x,y)\leq m_{q}(x,y)-\epsilon-d_{\infty}(T_{p},T_{q}),
	\label{eq:proof_equivalence_dinftydM_mergebound2}
	\end{equation}
	so \eqref{eq:proof_equivalence_dinftydM_mergebound1} and \eqref{eq:proof_equivalence_dinftydM_mergebound2}
	being held for all $\epsilon>0$ implies 
	\begin{equation}
	|m_{p}(x,y)-m_{q}(x,y)|\leq d_{\infty}(T_{p},T_{q}).
	\label{eq:proof_dinftydM_mergebound}
	\end{equation}
	And taking $\sup$ over all $x,y\in\mathcal{X}$ in \eqref{eq:proof_dinftydM_mergebound} $d_{M}(T_{p},\,T_{q})$ is upper bounded by $d_{\infty}(T_{p},\,T_{q})$, i.e. 
	\begin{equation}
	d_{M}(T_{p},\,T_{q})\leq d_{\infty}(T_{p},\,T_{q}).\label{eq:proof_equivalence_dinftydM_dMupperbound}
	\end{equation}
	Second, we show $d_{M}(T_{p},\,T_{q})\geq d_{\infty}(T_{p},\,T_{q})$.
	For all $\epsilon>0$, Let $x$ be such that $|p(x)-q(x)|>d_{\infty}(T_{p},T_{q})-\frac{\epsilon}{2}$.
	Then since $p$ and $q$ are continuous, there exists $\delta>0$
	such that 
	\[
	B(x,\,\delta)\subset p^{-1}\left(p(x)-\frac{\epsilon}{2},\,\infty\right)\cap q^{-1}\left(q(x)-\frac{\epsilon}{2},\,\infty\right).
	\]
	Then for any $y\in B(x,\,\delta)$, since $B(x,\,\delta)$ is connected,
	$p(x)-\frac{\epsilon}{2}\leq m_{p}(x,y)\leq p(x)$ holds and $q(x)-\frac{\epsilon}{2}\leq m_{q}(x,y)\leq q(x)$,
	so 
	\begin{align*}
	|m_{p}(x,y)-m_{q}(x,y)| & \geq|p(x)-q(x)|-\frac{\epsilon}{2}\\
	& >d_{\infty}(T_{p},T_{q})-\epsilon.
	\end{align*}
	Since this holds for any $\epsilon>0$, $d_{M}(T_{p},\,T_{q})$ is
	lower bounded by $d_{\infty}(T_{p},\,T_{q})$, i.e. 
	\begin{equation}
	d_{M}(T_{p},\,T_{q})\geq d_{\infty}(T_{p},\,T_{q}).\label{eq:proof_equivalence_dinftydM_dMlowerbound}
	\end{equation}
	\eqref{eq:proof_equivalence_dinftydM_dMupperbound} and \eqref{eq:proof_equivalence_dinftydM_dMlowerbound}
	implies $d_{\infty}(T_{p},T_{q})=\dm(T_{p},T_{q})$. 
	
	(ii)
	
	We have already seen that for all $x,y\in\mathcal{X}$, $|m_{p}(x,y)-m_{q}(x,y)|\leq d_{\infty}(T_{p},T_{q})$
	in \eqref{eq:proof_dinftydM_mergebound}. Hence for all $x,y\in\mathcal{X}$, 
	\begin{align*}
	& |[p(x)+p(y)-2m_{p}(x,y)]-[q(x)+q(y)-2m_{q}(x,y)]|\\
	& \leq|p(x)-q(x)|+|p(y)-q(y)|+2|m_{p}(x,y)-m_{q}(x,y)|\\
	& \leq4d_{\infty}(T_{p},T_{q}).
	\end{align*}
	Since this holds for all $x,y\in\mathcal{X}$, so 
	\[
	\dmm(T_{p},T_{q})\leq4d_{\infty}(T_{p},T_{q}).
	\]

	(iii)
	
	For all $\epsilon>0$, Let $x$ be such that $|p(x)-q(x)|>d_{\infty}(T_{p},T_{q})-\frac{\epsilon}{2}$,
	and without loss of generality assume that $p(x)>q(x)$. Let $y$
	be such that $p(y)+q(y)<\underset{x}{\inf}(p(x)+q(x))+\frac{\epsilon}{2}$.
	Then $m_{p}(x,y)\leq p(y)$ holds, and since $\mathcal{X}$ is connected,
	$q_{\inf}\leq m_{q}(x,y)$ holds. Hence 
	\begin{align*}
	& [p(x)+p(y)-2m_{p}(x,y)]-[q(x)+q(y)-2m_{q}(x,y)]\\
	& \geq[p(x)+p(y)-2p(y)]-[q(x)+q(y)-2q_{\inf}]\\
	& =p(x)-q(x)-(p(y)+q(y)-2q_{\inf})\\
	& >d_{\infty}(T_{p},\,T_{q})-\left(\underset{x}{\inf}(p(x)+q(x))-2q_{\inf}\right)-\epsilon\\
	& \geq d_{\infty}(T_{p},\,T_{q})-a-\epsilon,
	\end{align*}
	where $a=\underset{x\in\mathcal{X}}{\inf}(p(x)+q(x))-2\min\left\{ p_{\inf},\,q_{\inf}\right\}$.
	Since this holds for all $\epsilon>0$, we have
	\[
	\dmm(T_{p},\,T_{q})\geq d_{\infty}(T_{p},\,T_{q})-a.
	\]
\end{proof}

Hence $0\leq \dmm(T_{p},T_{q})\leq 4d_{\infty}(T_{p},T_{q})$ holds. And both extreme cases can happen, i.e. $\dmm(T_{p},T_{q})=4d_{\infty}(T_{p},T_{q})>0$ and $\dmm(T_{p},T_{q})=0,\,d_{\infty}(T_{p},T_{q})> 0$ can happens. 

\begin{lemma}
	There exists densities $p,q$ for both $\dmm(T_{p},T_{q})=4d_{\infty}(T_{p},T_{q})>0$ and $\dmm(T_{p},T_{q})=0,\,d_{\infty}(T_{p},T_{q})> 0$.
\end{lemma}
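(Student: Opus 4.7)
The plan is to exhibit two explicit pairs $(p,q)$ of nonnegative functions on a common domain $\mathcal{X}$, one for each extremal regime. For concreteness take $\mathcal{X}=[0,2]$ and fix a small constant $c>0$.

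For the saturation case $\dmm(T_p,T_q)=4 d_{\infty}(T_p,T_q)>0$: let $p\equiv 1$ on $\mathcal{X}$ and let $q$ be continuous with $q(0)=q(2)=1+c$, decreasing linearly to the single valley $q(1)=1-c$ and back up again. Then $d_{\infty}(T_p,T_q)=c$ and, evaluating at $x=0$, $y=2$, I would first compute $m_p(0,2)=1$ since $p$ is constant, so that $d_{T_p}(0,2)=0$. For $q$, the key routine step is to check that the upper level set $\{q\geq\lambda\}$ is connected for $\lambda\leq 1-c$ and splits into two disjoint intervals for $1-c<\lambda\leq 1+c$, which gives $m_q(0,2)=1-c$ and hence $d_{T_q}(0,2)=(1+c)+(1+c)-2(1-c)=4c$. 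Therefore $\dmm(T_p,T_q)\geq 4c=4 d_{\infty}(T_p,T_q)$, and the matching upper bound from Lemma~\ref{lem:infty_M}(ii) forces equality.

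For the degenerate case $\dmm(T_p,T_q)=0$ with $d_{\infty}(T_p,T_q)>0$: let $p$ be any continuous nonnegative function on $\mathcal{X}$ with a nontrivial tree (for instance one with two bumps), and set $q=p+c$. Then $d_{\infty}(T_p,T_q)=c>0$. The observation is that the upper level sets of $q$ at level $\lambda$ coincide with those of $p$ at level $\lambda-c$, so the two trees are canonically isomorphic and the merge heights satisfy $m_q(x,y)=m_p(x,y)+c$ for every pair $x,y\in\mathcal{X}$. Hence
\[
d_{T_q}(x,y)=q(x)+q(y)-2m_q(x,y)=p(x)+p(y)-2m_p(x,y)=d_{T_p}(x,y),
\]
and taking the supremum in $x,y$ gives $\dmm(T_p,T_q)=0$.

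The only nontrivial obstacle is the merge-height computation for $q$ in the first example, which relies on the one-dimensional fact that connected components of upper level sets of a continuous function on an interval are themselves intervals, together with checking that the valley at $x=1$ disconnects $0$ from $2$ exactly when the level exceeds $1-c$. Everything else is a direct substitution into the definitions of $d_{T_p}$ and $\dmm$.
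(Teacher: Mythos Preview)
Your first construction is correct and, after a trivial rescaling so that $p$ and $q$ integrate to $1$ (e.g.\ work on $[0,1]$ instead of $[0,2]$), gives an honest pair of probability densities. It differs from the paper's proof, which uses discontinuous indicator functions on $\mathbb{R}$, but the mechanism is the same: a valley in $q$ forces the merge height down by exactly $d_\infty$ at a pair where $p$ has merge height equal to the common value, so all four terms in the bound $|p(x)-q(x)|+|p(y)-q(y)|+2|m_p-m_q|$ contribute their maximum simultaneously. Your continuous example is arguably cleaner.

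Your second construction, however, does not prove the statement as written. If $p$ is a probability density on a set of positive measure, then $q=p+c$ with $c>0$ integrates to strictly more than $1$ and is never a density; on a domain of infinite measure it is not even integrable. The invariance observation $d_{T_{p+c}}=d_{T_p}$ is correct and elegant for arbitrary nonnegative functions, but the lemma asks for densities, and the paper does supply genuine probability densities in its proof. There the construction is quite different: on $\mathcal{X}=[0,1)$ one takes $p=2\cdot\mathbb{I}_{[0,1/2)}$ and $q=2\cdot\mathbb{I}_{[1/2,1)}$, two densities whose supports are disjoint, and checks by hand that $d_{T_p}(x,y)=d_{T_q}(x,y)$ for every pair $(x,y)$, case by case according to which half each point lies in. That argument does not reduce to a vertical shift; it exploits a symmetry between $p$ and $q$ that makes their tree pseudometrics coincide even though $d_\infty(T_p,T_q)=2$. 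You will need either this example or something in the same spirit to close the gap.
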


\begin{proof}
	Let $\mathcal{X}=\mathbb{R}$, $p(x)=I(x\in[0,1])$ and $q(x)=2I\left(x\in\left[0,\,\frac{1}{4}\right]\right)+2I\left(x\in\left[\frac{3}{4},\,1\right]\right)$.
	Then $d_{\infty}(T_{p},T_{q})=1$. And with $x=\frac{1}{8}$ and $y=\frac{7}{8}$,
	\begin{align*}
	|[p(x)+p(y)-2m_{p}(x,y)]-[q(x)+q(y)-2m_{q}(x,y)]| & =\left|[1+1-2]-[2+2-0]\right|\\
	& =4,
	\end{align*}
	hence $\dmm(T_{p},T_{q})=4d_{\infty}(T_{p},T_{q})$.
	
	Let $\mathcal{X}=[0,1)$, $p(x)=2I\left(x\in\left[0,\,\frac{1}{2}\right)\right)$
	and $q(x)=2I\left(x\in\left[\frac{1}{2},\,1\right)\right)$. Then
	$d_{\infty}(T_{p},T_{q})=2$. And for any $x\in\left[0,\,\frac{1}{2}\right)$
	and $y\in\left[\frac{1}{2},\,1\right)$, 
	\begin{align*}
	|[p(x)+p(y)-2m_{p}(x,y)]-[q(x)+q(y)-2m_{q}(x,y)]| & =\left|(2+0-0)+(0+2-0)\right|\\
	& =0.
	\end{align*}
	A similar case holds for $x\in\left[\frac{1}{2},\,1\right)$ and $y\in\left[0,\,\frac{1}{2}\right)$.
	And for any $x,y\in\left[0,\,\frac{1}{2}\right)$, 
	\begin{align*}
	|[p(x)+p(y)-2m_{p}(x,y)]-[q(x)+q(y)-2m_{q}(x,y)]| & =\left|(2+2-4)+(0+0-0)\right|\\
	& =0.
	\end{align*}
	and a similar case holds for $x,y\in\left[\frac{1}{2},\,1\right)$.
	Hence $\dmm(T_{p},T_{q})=0$.
\end{proof}

\subsection{Proof of Theorem~\ref{thm:hada}}

\textbf{Theorem \ref{thm:hada}.} \textit{
	Let $B(x)$ be the smallest set $B\in T_p$ such that $x\in B$.
	$d_{T_{p}}(x,y)$ is not Hadamard differentiable for $x\neq y$ when
	one of the following two scenarios occurs:
	\begin{itemize}
		\item[(i)] $\min\{p(x),p(y)\}=p(c)$ for some critical point $c$.
		\item[(ii)] $B(x)=B(y)$ and $p(x)=p(y)$.
	\end{itemize}
}

\begin{proof}
	For $x,y\in\mathbb{K}$, note that the merge height satisfies
	$$
	m_p(x,y) = \min\{t:(x,y) \,\mbox{are in the same connected component of} \, L(t) \} .
	$$
	Recall that
	$$
	d_{T_{p}}(x,y) = p(x)+p(y) -2m_p(x,y).
	$$
	Note that the modified merge distortion metric is $\dmm(p,q) = \sup_{x,y}|d_{T_{p}}(x,y)-d_{T_{q}}(x,y)|$.

	A feature of the merge height is that
	\begin{align*}
	m_p(x,y) = p(x) &\Rightarrow B(y)\subset B(x) \\
	m_p(x,y) = p(y) &\Rightarrow B(x)\subset B(y)\\
	m_p(x,y) \neq p(y) \mbox{ or }p(x) &\Rightarrow \exists c(x,y)\in\mathcal{C} \,\,\, s.t.\,\,\,m_p(x,y)=p(c(x,y)).
	\end{align*}
	where $\mathcal{C}$ is the collection of all critical points.
	Thus,
	we have
	$$
	d_{T_{p}}(x,y) =
	\begin{cases}
	p(x)-p(y)       & \quad \text{if } B(y)\subset B(x)\\
	p(y)-p(x)       & \quad \text{if } B(x)\subset B(y)\\
	p(x)+p(y) -2p(c(x,y))&\quad \text{otherwise}
	\end{cases}.
	$$

	\begin{figure}
		\centering
		\includegraphics[height=2 in]{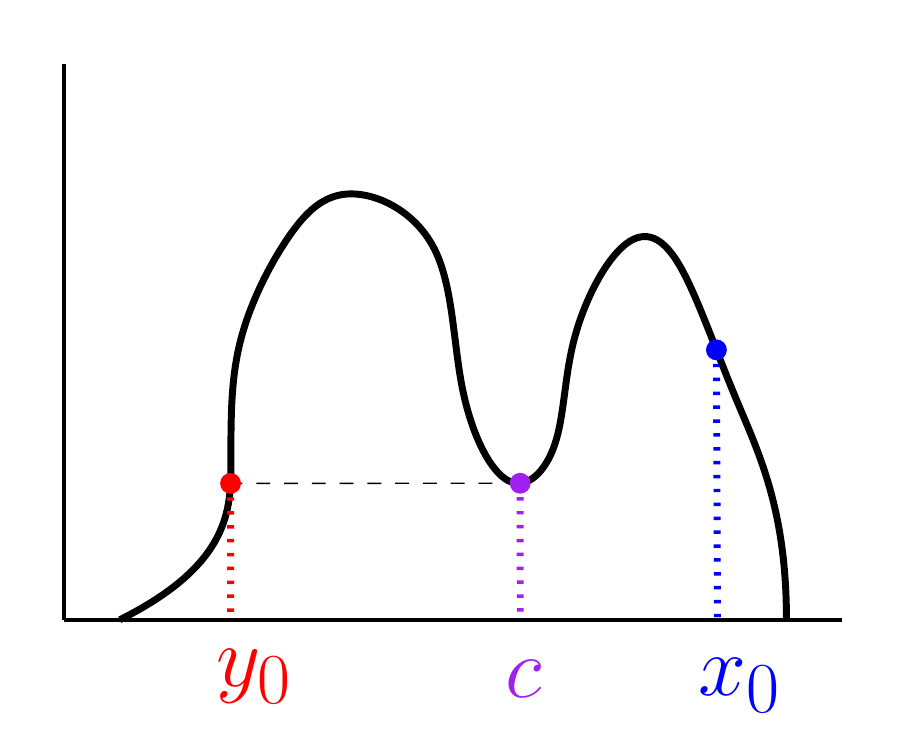}
		\caption{The example used in the proof of Theorem~\ref{thm:hada}.}
		\label{fig:thm:hahd1}
	\end{figure}
	
	{\bf Case 1:}\\
	We pick a pair of $x_0,y_0$ as in Figure~\ref{fig:thm:hahd1}.
	Now we consider a smooth symmetric function $g(x)>0$
	such that it peaks at $0$ and monotonically decay and has support $[-\delta,\delta]$
	for some small $\delta>0$.
	We pick $\delta$ small enough such that
	$p_\epsilon(x_0)=p(x_0), p_\epsilon(y_0)=p(y_0)$.
	For simplicity, let $g(0) = \max_x g(x) = 1$.
	
	Now consider perturbing $p(x)$ along $g(x-c)$ with amount $\epsilon$.
	Namely, we define 
	$$
	p_\epsilon(x)  = p(x) + \epsilon\cdot g(x-c).
	$$
	For notational convenience,
	define
	$\xi_{p,\epsilon} = d_{T_{p_\epsilon}}(x_0,y_0)$.
	When $|\epsilon|$ is sufficiently small,
	define
	\begin{align*}
	\xi_{p,\epsilon}(x_0,y_0) &= d_{T_{p}}(x_0,y_0) \quad \mbox{if $\epsilon>0$},\\
	\xi_{p,\epsilon}(x_0,y_0) &= d_{T_{p}}(x_0,y_0)-2 \epsilon \quad \mbox{if $\epsilon<0$}.
	\end{align*}
	This is because when $\epsilon>0$, the $p_\epsilon(c)>p(c)$,
	so the merge height for $x_0,y_0$ using $p_\epsilon$ is still the same as $p(y_0)$,
	which implies $\xi_{p,\epsilon}(x_0,y_0) = d_{T_{p}}(x_0,y_0)$.
	On the other hand, when $\epsilon<0$, $p_\epsilon(c)<p(c)$,
	so the merge height is no longer $p(y_0)$ but $p_\epsilon(c)$.
	Then using the fact that $|\epsilon| = p(c) - p_\epsilon(c)$
	we obtain the result.
	
	Now we show that $ d_{T_{p}}(x_0,y_0)$ is not Hadamard differentiable.
	In this case, $\phi(p) = \xi_{p}(x_0,y_0)$.
	First, we pick a sequence of $\epsilon_n$ such that
	$\epsilon_n\rightarrow 0$ and $\epsilon_n>0$ if $n$ is even and $\epsilon_n<0$
	if $n$ is odd.
	Plugging $t\equiv \epsilon_n$ and $q_t = g$ into the definition of Hadamard differentiability,
	we have
	$$
	\phi'(p) \equiv \frac{\xi_{p,{\epsilon_n}}(x_0,y_0)-d_{T_{p}}(x_0,y_0)}{\epsilon_n}
	$$
	is alternating between $0$ and $2$,
	so it does not converge.
	This shows that the function $d_{T_{p}}(x,y)$ at such a pair of $(x_0,y_0)$ is non-Hadamard 
	differentiable.

	{\bf Case 2:}\\
	The proof of this case uses the similar idea as the proof of case 1.
	We pick the pair $(x_0,y_0)$ satisfying the desire conditions.
	We consider the same function $g $ but now we perturb $p$
	by
	$$
	p_\epsilon(x) = p(x) + \epsilon\cdot g(x-x_0),
	$$
	and as long as $\delta$ is small, we will have $p_\epsilon(y_0) = p(y_0)$.
	Since $B(x_0)=B(y_0)$ and $p(x_0)=p(y_0)$, $d_{T_{p}}(x_0,y_0)=0$.
	When $\epsilon>0$, $\xi_{p,\epsilon}(x_0,y_0) = \epsilon$,
	and on the other hand, when $\epsilon<0$, $\delta_\epsilon(x_0,y_0) = -\epsilon$.
	
	In this case, again, $\phi(p) = \xi_{p}(x_0,y_0)$.
	Now we use the similar trick as case 1: picking 
	a sequence of $\epsilon_n$ such that
	$\epsilon_n\rightarrow 0$ and $\epsilon_n>0$ if $n$ is even and $\epsilon_n<0$
	if $n$ is odd.
	Under this sequence of $\epsilon_n$, the `derivative' along $g$
	$$
	\phi'(p) \equiv\frac{\xi_{p,{\epsilon_n}}(x_0,y_0)-d_{T_{p}}(x_0,y_0)}{\epsilon_n}
	$$
	is alternating between $1$ and $-1$,
	so it does not converge.
	Thus, $d_{T_{p}}(x,y)$ at such a pair of $(x_0,y_0)$ is non-Hadamard 
	differentiable.
\end{proof}

\section{Proofs for Section \ref{sec:confidence_set} and Appendix \ref{app:confidence_set}}
\label{app:proof_confidence_set}

\subsection{Proof of Lemma \ref{lem:morse}}

\textbf{Lemma \ref{lem:morse}.} \textit{
	Let $p_h = \mathbb{E}[\hat p_h]$
	where $\hat p_h$ is the kernel estimator with bandwidth $h$.
	We assume that $p$ is a Morse function
	supported on a compact set with finitely many, distinct, critical values.
	There exists $h_0>0$ such that for all
	$0 < h < h_0$,
	$T_{p}$ and $T_{p_{h}}$ have the same topology in Appendix~\ref{app:topology}.
}

\begin{proof}
	Let $S$ be the compact support of $p$. By the classical
	stability properties of the Morse function, there exists a constant
	$C_{0}>0$ such that for any other smooth function $q:S\to\mathbb{R}$
	with $\|q-p\|_{\infty},\|\nabla q-\nabla p\|_{\infty},\|\nabla^{2}q-\nabla^{2}p\|_{\infty}<C_{0}$,
	$q$ is a Morse function. Moreover, there exist two diffeomorphisms
	$h:\mathbb{R}\to\mathbb{R}$ and $\phi:S\to S$ such that $q=h\circ p\circ\phi$
	See e.g., proof of \cite[Lemma 16]{chazal2014robust}.
	Further, $h$ should be nondecreasing if $C_{0}$ is small enough.
	Hence for any $C\in T_{p}(\lambda)$, since $q\circ\phi^{-1}(C)=h\circ p(C)$,
	so $\phi^{-1}(C)$ is a connected component of $T_{q}(h(\lambda))$.
	Now define $\Phi:\{T_{p}\}\to\{T_{q}\}$ as $\Phi(C)=\phi^{-1}(C)$.
	Then since $\phi$ is a diffeomorphism, $C_{1}\subset C_{2}$ if and
	only if $\Phi(C_{1})=\phi^{-1}(C_{1})\subset\phi^{-1}(C_{2})=\Phi(C_{2})$,
	hence $T_{p}\preceq T_{q}$ holds. And from $p\circ\phi=h^{-1}\circ q$,
	we can similarly show $T_{q}\preceq T_{p}$ as well. Hence from Lemma~\ref{lem:partial_equivalence}, two trees $T_{p}$ and $T_{q}$ are topologically equivalent according to the topology in Appendix~\ref{app:topology}.
	
	Now by the nonparametric theory (see e.g. page 144-145 of \cite{scott2015multivariate},
	and \cite{wasserman2006all}), there is a constant $C_{1}>0$ such
	that $\|p_{h}-p\|_{2,\max}\leq C_{1}h^{2}$ when $h<1$. Thus, when
	$0\leq h\leq\sqrt{\frac{C_{0}}{C_{1}}}$, $T_{h}=T_{p_{h}}$ and $T=T_{p}$
	have the same topology.
\end{proof}

\subsection{Proof of Lemma \ref{lem:pruning}}

\textbf{Lemma \ref{lem:pruning}.} \textit{
	Suppose that the $\life$ function satisfies: for all $[C]\in E(\hat{T}_{h})$,
	$\life^{top}([C])\leq\life([C])$. Then
	\begin{itemize}
		\item[(i)] $Pruned_{\life,\hat{t}_{\alpha}}(\hat{T}_{h}) \preceq \esttree$.
		\item[(ii)]	there exists a function $\tilde{p}$ such that $T_{\tilde{p}}=Pruned_{\life,\hat{t}_{\alpha}}(\hat{T}_{h})$.
		\item[(iii)] $\tilde{p}$ in (ii) satisfies $\tilde{p} \in \hat C_\alpha$.
	\end{itemize}
}

\begin{proof}
	
	(i)
	
	This is implied by Lemma \ref{lem:partial_insert}.
	
	(ii)
	
	Note that $Pruned_{\life,\hat{t}_{\alpha}}(\hat{T}_{h})$
	is generated by function $\tilde{p}$ defined as
	\[
	\tilde{p}(x)=\sup\left\{ \lambda:\,\text{there exists }C\in\hat{T}_{h}(\lambda)\text{ such that }x\in C\text{ and }\life([C])>2\hat{t}_{\alpha}\right\} +\hat{t}_{\alpha}.
	\]
	
	(iii)
	
	Let $C_{0}:=\bigcup\{C:\,\life([C])\leq2\hat{t}_{\alpha}\}$. Then
	note that 
	\[
	\hat{p}(x)=\sup\left\{ \lambda:\,\text{there exists }C\in\hat{T}_{h}(\lambda)\text{ such that }x\in C\right\} ,
	\]
	so for all $x$, $\tilde{p}(x)\leq\hat{p}(x)+\hat{t}_{\alpha}$, and
	if $x\notin C_{0}$, $\tilde{p}(x)=\hat{p}(x)+\hat{t}_{\alpha}$.
	Then note that 
	\begin{align*}
	& \left\{ \lambda:\,\text{there exists }C\in\hat{T}_{h}(\lambda)\text{ such that }x\in C\right\} \\
	& \backslash\left\{ \lambda:\,\text{there exists }C\in\hat{T}_{h}(\lambda)\text{ such that }x\in C\text{ and }\life([C])>2\hat{t}_{\alpha}\right\} \\
	& \subset\left\{ \lambda:\,\text{there exists }C\in\hat{T}_{h}(\lambda)\text{ such that }x\in C\text{ and }\life([C])\leq2\hat{t}_{\alpha}\right\} 
	\end{align*}
	Let $e_{x}:=\max\left\{ e:\,x\in\cup e,\,\life(e)\leq2\hat{t}_{\alpha}\right\} $.
	Then note that $x\in C$ and $\life([C])\leq2\hat{t}_{\alpha}$ implies
	that we can find some $B\in e_{x}$ such that $C\subset B$, so 
	\[
	\left\{ \lambda:\,\text{there exists }C\in\hat{T}_{h}(\lambda)\text{ such that }x\in C\text{ and }\life([C])\leq2\hat{t}_{\alpha}\right\} \subset cumlevel(e_{x}).
	\]
	Hence 
	\begin{align*}
	\hat{p}(x)+\hat{t}_{\alpha}-\tilde{p}(x) & \leq\sup\{cumlevel(e_{x})\}-\inf\left\{ cumlevel(e_{x})\right\} \\
	& =\life^{top}(e_{x})\\
	& \leq\life(e_{x})\leq2\hat{t}_{\alpha},
	\end{align*}
	and hence 
	\[
	\hat{p}(x)-\hat{t}_{\alpha}\leq\tilde{p}(x)\leq\hat{p}(x)+\hat{t}_{\alpha}.
	\]
\end{proof}

\end{document}